\documentclass[12pt]{article}
\usepackage{amsmath,amssymb,amsthm}
\numberwithin{equation}{section}
\usepackage{units}
\usepackage{color}
\usepackage[T1]{fontenc}
\usepackage[utf8]{inputenc}
\usepackage{authblk}
\usepackage{bm}
\usepackage{graphicx}

\usepackage{datetime}

\usepackage[colorlinks=true,
linkcolor=webgreen,
filecolor=webbrown,
citecolor=webgreen]{hyperref}

\definecolor{webgreen}{rgb}{0,.5,0}
\definecolor{webbrown}{rgb}{.6,0,0}

\usepackage[toc,page]{appendix}

\textheight 24.5cm
\textwidth 16.3cm
\oddsidemargin 0.in
\evensidemargin 0.in
\topmargin -1.8cm

\newcommand{\blue}[1]{{\color{blue}#1}}
\newcommand{\red}[1]{{\color{red}#1}}

\setlength{\parindent}{0cm}


\newtheorem{thm}{Theorem}
\newtheorem{theorem}[thm]{Theorem}
\newtheorem{lemma}{Lemma}

\title{Multiple sums involving the terms of a general second order sequence of numbers}

\author[]{Kunle Adegoke \\\href{mailto:adegoke00@gmail.com}{\tt adegoke00gmail.com}}

\affil{Department of Physics and Engineering Physics, \mbox{Obafemi Awolowo University}, 220005 Ile-Ife, Nigeria}

\begin{document}
\date{}

\maketitle

\begin{abstract}
\noindent We evaluate the nested sum $\sum_{a_{n - 1}  = c}^{a_n } {\sum_{a_{n - 2}  = c}^{a_{n - 1} } { \cdots \sum_{a_0  = c}^{a_1 } {x^{a_0 } } } }$ where $a_n$ and $c$ are any integers and $x$ is a real or complex variable. Consequently, we evaluate multiple sums involving the terms of a general second order sequence, the Horadam sequence $(W_j(a,b;p,q))$, defined for all non-negative integers $j$ by the recurrence relation $W_0  = a,\,W_1  = b;\,W_j  = pW_{j - 1}  - qW_{j - 2}\, (j \ge 2)$; where $a$, $b$, $p$ and $q$ are arbitrary complex numbers, with $p\ne 0$, $q\ne 0$.

\end{abstract}
\noindent 2010 {\it Mathematics Subject Classification}:
Primary 11B39; Secondary 11B37.

\noindent \emph{Keywords: }
Horadam sequence, Fibonacci number, Lucas number, Lucas sequence, summation identity, nested sum, multiple sum.

\section{Introduction}
Let $F_j$ be the $j^{th}$ Fibonacci number. Ivie~\cite{ivie69} has shown that
\[
\sum_{s = 1}^m {\sum_{r = 1}^s {F_r } }  = F_{m + 4}  - F_4  - m,
\]

\[
\sum_{m = 1}^n {\sum_{s = 1}^m {\sum_{r = 1}^s {F_r } } }  = F_{n + 6}  - F_6  - nF_4  - \frac{{n(n + 1)}}{2},
\]
and more generally,
\begin{equation}\tag{H}\label{eq.b1al0dl}
\sum_{a_{n - 1}  = 1}^{a_n } {\sum_{a_{n - 2}  = 1}^{a_{n - 1} } { \cdots \sum_{a_0  = 1}^{a_1 } {F_{a_0 } } } }  = F_{a_n  + 2n}  - \sum_{j = 0}^{n - 1} {F_{2(n - j)} \binom{a_n + j - 1}j} .
\end{equation}
In this paper we will extend the study to the Horadam sequence and derive more such sums. 

Identity~\eqref{eq.b1al0dl} is a special case of the more general identity (see Theorem~\ref{thm.mrlteua})
\begin{equation*}
\begin{split}
&\sum_{a_{n - 1}  = c}^{a_n } {\sum_{a_{n - 2}  = c}^{a_{n - 1} } { \cdots \sum_{a_0  = c}^{a_1 } {\frac{{W_{ra_0  + s} }}{{V_r^{a_0 } }}} } }\\
&\qquad  = ( - 1)^n \frac{{W_{r(a_n  + 2n) + s} }}{{q^{rn} V_r^{a_n } }} - \frac1{V_r^{c - 1}}\sum_{j = 0}^{n - 1} {( - 1)^{n - j} \frac{{W_{r(2n - 2j + c - 1) + s} }}{{q^{r(n - j)} }}\binom{a_n + j - c}j},
\end{split}
\end{equation*}
in which  $(W_j(a,b;p,q))$ is the Horadam sequence~\cite{horadam65} defined for all non-negative integers $j$, by the recurrence relation
\begin{equation}\label{eq.vhrb5b3}
W_0  = a,\,W_1  = b;\,W_j  = pW_{j - 1}  - qW_{j - 2}\, (j \ge 2);
\end{equation}
where $a$, $b$, $p$ and $q$ are arbitrary complex numbers, with $p\ne 0$, $q\ne 0$.

Two important cases of  $(W_j)$ are the Lucas sequences of the first kind, $(U_j(p,q))=(W_j(0,1;p,q))$, and of the second kind, $(V_j(p,q))=(W_j(2,p;p,q))$; so that 
\[
\mbox{$U_0=0$, $U_1=1$};\, U_j=pU_{j-1}-qU_{j-2}, \mbox{($j\ge 2$)};
\]
and
\[
\mbox{$V_0=2$, $V_1=p$};\, V_j=pV_{j-1}-qV_{j-2}, \mbox{($j\ge 2$)}.
\]
The most well-known Lucas sequences are the Fibonacci sequence, $(F_j)=(U_j(1,-1))$ and the sequence of Lucas numbers, $(L_j)=(V_j(1,-1))$.

The special case $p=1$ is also important, giving the sequence $(w_j(a,b;q))=(W_j(a,b;1,q))$, with corresponding special Lucas sequences $(u_j(q))=(U_j(1,q))$ and $(v_j(q))=(V_j(1,q))$. The particular case $(G_j(a,b))=(w_j(a,b;-1))$ is the so-called gibonacci sequence, with the Fibonacci and Lucas sequences, $(F_j)=(G_j(0,1))$ and $(L_j)=(G_j(2,1))$, as special cases. Explicitly,
\[
G_0  = a,\,G_1  = b;\,G_j  = G_{j - 1}  + G_{j - 2}\, (j \ge 2).
\]
The Binet formulas for sequences $(U_j)$, $(V_j)$ and $(W_j)$ in the non-degenerate case, $p^2 - 4q > 0$, are
\begin{equation}\tag{BW}\label{eq.mt3covv}
U_j=\frac{\tau^j-\sigma^j}{\tau-\sigma}=\frac{\tau^j-\sigma^j}\Delta,\qquad V_j=\tau^j+\sigma^j, \qquad
W_j = \mathbb A\tau ^j  + \mathbb B\sigma ^j\,,
\end{equation}
with
\[
\mathbb A=\frac{{b - a\sigma}}{{\tau  - \sigma }},\quad \mathbb B=\frac{{a\tau  - b}}{{\tau  - \sigma }},
\]
where 
\[\tau=\tau(p,q)=\frac{p+\sqrt{p^2-4q}}2,\quad\sigma=\sigma(p,q)=\frac{p-\sqrt{p^2-4q}}2,\]
are the distinct zeros of the characteristic polynomial $x^2-px+q$ of the Horadam sequence; so that $\tau\sigma=q$ and $\tau + \sigma=p$.

The Binet formulas for the Fibonacci and Lucas numbers are
\begin{equation}\tag{BF}\label{eq.qohepwk}
F_j  = \frac{{\alpha ^j  - \beta ^j }}{{\alpha  - \beta }} = \frac{{\alpha ^j  - \beta ^j }}{{\sqrt 5 }},\quad L_j  = \alpha ^j  + \beta ^j,
\end{equation}
where $\alpha=\tau(1,-1)=(1 + \sqrt 5)/2$ is the golden ratio and $\beta=\sigma(1,-1)=-1/\alpha$.

Extension of the definition of $W_n$ to negative subscripts is provided by writing the recurrence relation as $W_{-n}=(pW_{-n+1}-W_{-n+2})/q$.

\section{Preliminary results}
Let $x$ be a real or complex variable, $a_n$ an integer and $n$ a positive integer. We wish to evaluate
\[
\sum_{a_{n - 1}  = 1}^{a_n } {\sum_{a_{n - 2}  = 1}^{a_{n - 1} } { \cdots \sum_{a_0  = 1}^{a_1 } {x^{a_0 } } } };
\]
the basic ingredient in our derivations.

The geometric progression sum
\[
\sum_{k = 1}^m {x^k }  = \frac{{x^{m + 1}  - x}}{{x - 1}}
\]
can be arranged as
\begin{equation}\tag{D}\label{eq.p2fza8v}
\frac{{x - 1}}{x}\sum_{j = 1}^m {x^j }  = x^m  - 1.
\end{equation}

Multiply through the identity
\begin{equation}\label{eq.m05ob73}
\frac{{x - 1}}{x}\sum_{a_0  = 1}^{a_1 } {x^{a_0 } }  = x^{a_1 }  - 1
\end{equation}
by $(x-1)/x$ and sum over $a_1$, making use of \eqref{eq.p2fza8v} with $m=a_2$, to obtain
\[
\begin{split}
\left( {\frac{{x - 1}}{x}} \right)^2 \sum_{a_1  = 1}^{a_2 } {\sum_{a_0  = 1}^{a_1 } {x^{a_0 } } } & = \frac{{x - 1}}{x}\sum_{a_1  = 1}^{a_2 } {x^{a_1 } }  - \frac{{x - 1}}{x}\sum_{a_1  = 1}^{a_2 } 1\\
&= x^{a_2 }  - 1 - \left( {\frac{{x - 1}}{x}} \right)\sum_{a_1  = 1}^{a_2 } 1.
\end{split}
\]
Multiply through the above by $(x-1)/x$ and sum over $a_2$, making use of \eqref{eq.p2fza8v} with $m=a_3$. This gives
\[
\begin{split}
\left( {\frac{{x - 1}}{x}} \right)^3 \sum_{a_2 = 1}^{a_3 } {\sum_{a_1  = 1}^{a_2 } {\sum_{a_0  = 1}^{a_1 } {x^{a_0 } } } }  &= \frac{{x - 1}}{x}\sum_{a_2  = 1}^{a_3 } {x^{a_2 } }  - \frac{{x - 1}}{x}\sum_{a_2  = 1}^{a_3 } 1  - \left( {\frac{{x - 1}}{x}} \right)^2 \sum_{a_2  = 1}^{a_3 } {\sum_{a_1  = 1}^{a_2 } 1 } \\
&= x^{a_3 }  - 1 - \frac{{x - 1}}{x}\sum_{a_2  = 1}^{a_3 } 1  - \left( {\frac{{x - 1}}{x}} \right)^2 \sum_{a_2  = 1}^{a_3 } {\sum_{a_1  = 1}^{a_2 } 1 }.
\end{split}
\]
Continuing the iteration, we find
\begin{equation}\label{eq.v0zquxz}
\left( {\frac{{x - 1}}{x}} \right)^n \sum_{a_{n - 1}  = 1}^{a_n } {\sum_{a_{n - 2}  = 1}^{a_{n - 1} } { \cdots \sum_{a_0  = 1}^{a_1 } {x^{a_0 } } } }  = x^{a_n }  - 1 - \sum_{j = 1}^{n - 1} {\left( {\frac{{x - 1}}{x}} \right)^j \sum_{a_{n - 1}  = 1}^{a_n } {\sum_{a_{n - 2}  = 1}^{a_{n - 1} } { \cdots \sum_{a_{n - j}  = 1}^{a_{n - j + 1} } 1 } } }.
\end{equation}
Thus, the task of finding $\sum_{a_{n - 1}  = 1}^{a_n } {\sum_{a_{n - 2}  = 1}^{a_{n - 1} } { \cdots \sum_{a_0  = 1}^{a_1 } {x^{a_0 } } } }$ reduces to that of evaluating\\ $\sum_{a_{n - 1}  = 1}^{a_n } {\sum_{a_{n - 2}  = 1}^{a_{n - 1} } { \cdots \sum_{a_{n - j}  = 1}^{a_{n - j + 1} } 1 } }$ for the $n-1$ values of $j$. Note that there are exactly $j$ sums in the latter multiple sum. We will prove a lemma and return to \eqref{eq.v0zquxz}.
\begin{lemma}\label{lem.hy9navo}
Let $k$, $m$ and $b_s$ be non-negative integers and let $s$ be a positive integer. Then
\begin{equation}\label{eq.ckl3mw8}
\sum_{j=1}^m{\binom{j + k - 1}k}=\binom{k + m}{k + 1},
\end{equation}

\begin{equation}\label{eq.p3e2mj9}
\sum_{b_{s - 1}  = 1}^{b_s } {\sum_{b_{s - 2}  = 1}^{b_{s - 1} } { \cdots \sum_{b_0  = 1}^{b_1 } 1 } }  = \binom{b_s + s - 1}s.
\end{equation}
\end{lemma}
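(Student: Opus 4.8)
The plan is to establish the two identities in sequence, since \eqref{eq.p3e2mj9} is essentially an iterated application of \eqref{eq.ckl3mw8}. I would prove \eqref{eq.ckl3mw8} first, recognizing it as a form of the hockey-stick identity, and then feed it into an induction on $s$ to obtain \eqref{eq.p3e2mj9}.

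For \eqref{eq.ckl3mw8}, the cleanest route is induction on $m$ using Pascal's rule. The base case $m=0$ leaves an empty sum on the left and $\binom{k}{k+1}=0$ on the right, so both vanish. For the inductive step, assuming $\sum_{j=1}^m\binom{j+k-1}{k}=\binom{k+m}{k+1}$, I would add the $(m+1)$-th term $\binom{k+m}{k}$ and invoke $\binom{k+m}{k+1}+\binom{k+m}{k}=\binom{k+m+1}{k+1}$ to close the induction. One can alternatively reindex via $i=j+k-1$ and cite the standard hockey-stick identity $\sum_{i=k}^{k+m-1}\binom{i}{k}=\binom{k+m}{k+1}$, but the direct induction keeps the argument self-contained.

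For \eqref{eq.p3e2mj9}, I would induct on the number of nested sums $s$. The base case $s=1$ reads $\sum_{b_0=1}^{b_1}1=b_1=\binom{b_1}{1}$, which matches the right-hand side at $s=1$. For the inductive step, I would apply the induction hypothesis to the inner $(s-1)$-fold sum over $b_0,\dots,b_{s-2}$, whose top value is $b_{s-1}$; this evaluates to $\binom{b_{s-1}+s-2}{s-1}$. The whole sum then reduces to
\[
\sum_{b_{s-1}=1}^{b_s}\binom{b_{s-1}+s-2}{s-1},
\]
which is exactly \eqref{eq.ckl3mw8} with summation index $j=b_{s-1}$, upper limit $m=b_s$ and $k=s-1$. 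Hence it equals $\binom{(s-1)+b_s}{s}=\binom{b_s+s-1}{s}$, closing the induction.

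The argument is almost entirely bookkeeping, so I do not anticipate a serious obstacle; the one place to stay careful is matching the parameters when invoking \eqref{eq.ckl3mw8} inside the inductive step — in particular confirming that the shifted upper argument $b_{s-1}+s-2$ aligns with $j+k-1$ under $k=s-1$, and checking that the edge cases (for instance $b_s=0$, which forces an empty outer sum against $\binom{s-1}{s}=0$) remain consistent.
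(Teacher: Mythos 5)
Your proposal is correct and follows essentially the same route as the paper: induction on $m$ via Pascal's rule for \eqref{eq.ckl3mw8}, then induction on the depth $s$ for \eqref{eq.p3e2mj9}, feeding \eqref{eq.ckl3mw8} with $k=s-1$, $m=b_s$ into the outermost sum. The parameter bookkeeping you flag checks out, so nothing further is needed.
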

\begin{proof}
Both identities will be proved by induction. To prove \eqref{eq.ckl3mw8} we keep $k$ fixed and carry out an induction on $m$. Identity \eqref{eq.ckl3mw8} is obviously true for $m=0$ and $m=1$. Assume the truth for $m=r$ to establish the induction hypothesis:
\[
P_r:\quad \sum_{j=1}^r{\binom{j + k - 1}k}=\binom{k + r}{k + 1}.
\]
We wish to prove that $P_r\implies P_{r + 1}$ for $r$ a positive integer.
\[
\begin{split}
\sum_{j=1}^{r + 1}{\binom{j + k - 1}k}&=\sum_{j=1}^r{\binom{j + k - 1}k} + \binom{k + r}k\\
&=\binom{k + r}{k + 1} + \binom{k + r}k, \text{ by hypothesis $P_r$},\\
&=\binom{k + r + 1}{k + 1},
\end{split}
\]
where, in the last step, we used Pascal's identity:
\[
\binom s{k + 1} + \binom sk=\binom {s + 1}{k + 1}.
\]
Thus, $P_r\implies P_{r + 1}$.

Since
\[
\sum_{b_0=1}^{b_1}1=\binom{b_1}1=b_1,
\]
identity \eqref{eq.p3e2mj9} holds for $s=1$. Assume the veracity for $s=k$ to get the induction hypothesis:
\[
P_k:\quad\sum_{b_{k - 1}  = 1}^{b_k } {\sum_{b_{k - 2}  = 1}^{b_{k - 1} } { \cdots \sum_{b_0  = 1}^{b_1 } 1 } }  = \binom{b_k + k - 1}k.
\]
We have
\[
\begin{split}
\sum_{b_{k}  = 1}^{b_{k + 1} } {\sum_{b_{k - 1}  = 1}^{b_k }\sum_{b_{k - 2}  = 1}^{b_{k - 1} } { \cdots \sum_{b_0  = 1}^{b_1 } 1 } }  &= \sum_{b_k=1}^{b_{k + 1}}{\binom{b_k + k - 1}k},\text{by hypothesis $P_k$},\\
&=\binom{b_{k+1} + k}{k + 1}, \text{by \eqref{eq.ckl3mw8} with $m=b_{k + 1}$}.
\end{split}
\]
Thus, $P_k\implies P_{k + 1}$.
\end{proof}
Using identity~\eqref{eq.p3e2mj9}, we find
\[
\sum_{a_{n - 1}  = 1}^{a_n } {\sum_{a_{n - 2}  = 1}^{a_{n - 1} } { \cdots \sum_{a_{n - j}  = 1}^{a_{n - j + 1} } 1 } }=\binom{a_n + j - 1}j,
\]
which inserted in \eqref{eq.v0zquxz}, yields
\begin{equation}\label{eq.y0z4ke0}
\left( {\frac{{x - 1}}{x}} \right)^n \sum_{a_{n - 1}  = 1}^{a_n } {\sum_{a_{n - 2}  = 1}^{a_{n - 1} } { \cdots \sum_{a_0  = 1}^{a_1 } {x^{a_0 } } } }  = x^{a_n }  - 1 - \sum_{j = 1}^{n - 1} {\left( {\frac{{x - 1}}{x}} \right)^j \binom{a_n + j - 1}j }.
\end{equation}
We now give a formal proof of \eqref{eq.y0z4ke0}. In section~\ref{sec.jmspw0p} we will consider some of its applications.
\begin{lemma}
Let $n$ be a positive integer. Let $x$ be a real or complex variable. Then,
\begin{equation}\tag{E}\label{eq.bmc15xg}
\left( {\frac{{x - 1}}{x}} \right)^n \sum_{a_{n - 1}  = 1}^{a_n } {\sum_{a_{n - 2}  = 1}^{a_{n - 1} } { \cdots \sum_{a_0  = 1}^{a_1 } {x^{a_0 } } } }  = x^{a_n } - \sum_{j = 0}^{n - 1} {\left( {\frac{{x - 1}}{x}} \right)^j \binom{a_n + j - 1}j }.
\end{equation}
\end{lemma}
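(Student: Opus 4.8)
The plan is to prove \eqref{eq.bmc15xg} by induction on $n$, which supplies the rigorous confirmation of the heuristic iteration that produced \eqref{eq.y0z4ke0}. It is worth noting at the outset that \eqref{eq.bmc15xg} and \eqref{eq.y0z4ke0} are really the same statement: the $j=0$ term of the sum in \eqref{eq.bmc15xg} is $\left(\frac{x-1}{x}\right)^0\binom{a_n-1}{0}=1$, so peeling it off reproduces exactly the isolated $-1$ and the sum running from $j=1$ that appear in \eqref{eq.y0z4ke0}. Hence it suffices to establish either form, and I would prove the compact form \eqref{eq.bmc15xg} directly.

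For the base case $n=1$ the left-hand side is $\frac{x-1}{x}\sum_{a_0=1}^{a_1}x^{a_0}$, which equals $x^{a_1}-1$ by \eqref{eq.p2fza8v} with $m=a_1$; the right-hand side is $x^{a_1}-\binom{a_1-1}{0}=x^{a_1}-1$, so the two agree. For the inductive step I would assume \eqref{eq.bmc15xg} for a fixed $n$, multiply both sides by $(x-1)/x$, and sum over $a_n$ from $1$ to $a_{n+1}$. The left-hand side then becomes the $(n+1)$-fold nested sum carrying the prefactor $\left(\frac{x-1}{x}\right)^{n+1}$, precisely the object appearing in \eqref{eq.bmc15xg} with $n$ replaced by $n+1$. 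On the right-hand side the geometric part $\frac{x-1}{x}\sum_{a_n=1}^{a_{n+1}}x^{a_n}$ collapses to $x^{a_{n+1}}-1$ by \eqref{eq.p2fza8v}, while each binomial sum $\sum_{a_n=1}^{a_{n+1}}\binom{a_n+j-1}{j}$ collapses to $\binom{a_{n+1}+j}{j+1}$ by the hockey-stick identity \eqref{eq.ckl3mw8}, taken with $k=j$ and $m=a_{n+1}$.

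The one step demanding care is the recombination. After these collapses the surviving sum is $\sum_{j=0}^{n-1}\left(\frac{x-1}{x}\right)^{j+1}\binom{a_{n+1}+j}{j+1}$, and relabelling the summation index (writing $j+1$ as the new $j$) converts it into $\sum_{j=1}^{n}\left(\frac{x-1}{x}\right)^{j}\binom{a_{n+1}+j-1}{j}$. Reabsorbing the constant $-1$ as the $j=0$ term, once more via $\binom{a_{n+1}-1}{0}=1$, yields exactly $x^{a_{n+1}}-\sum_{j=0}^{n}\left(\frac{x-1}{x}\right)^{j}\binom{a_{n+1}+j-1}{j}$, which is \eqref{eq.bmc15xg} at $n+1$, closing the induction.

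I expect no genuine obstacle here, only bookkeeping: the delicate point is keeping the off-by-one in the upper and lower entries of the binomial coefficient consistent when applying \eqref{eq.ckl3mw8}, and then performing the index shift $j+1\mapsto j$ without disturbing the range of summation. Once those are handled correctly, the geometric collapse and the hockey-stick collapse do all the work.
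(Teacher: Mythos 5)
Your proof is correct and follows essentially the same route as the paper's: induction on $n$, with the base case reducing to \eqref{eq.p2fza8v}, the geometric collapse of $\frac{x-1}{x}\sum_{a_n=1}^{a_{n+1}}x^{a_n}$, the hockey-stick collapse of the binomial sums via \eqref{eq.ckl3mw8}, and the final index shift with the $-1$ reabsorbed as the $j=0$ term. The only difference is cosmetic: the paper labels the induction variable $k$ rather than $n$.
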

\begin{proof}
The proof is by induction on $n$. The identity is readily verified to be true for $n=1$, giving \eqref{eq.m05ob73}. Assume the truth for $n=k$. We have the induction hypothesis:
\[
P_k:\quad\left( {\frac{{x - 1}}{x}} \right)^k \sum_{a_{k - 1}  = 1}^{a_k } {\sum_{a_{k - 2}  = 1}^{a_{k - 1} } { \cdots \sum_{a_0  = 1}^{a_1 } {x^{a_0 } } } }  = x^{a_k } - \sum_{j = 0}^{k - 1} {\left( {\frac{{x - 1}}{x}} \right)^j \binom{a_k + j - 1}j }.
\]
We wish to prove that $P_k\implies P_{k + 1}$.

We have
\begin{equation}\label{eq.gay98lw}
\begin{split}
&\left( {\frac{{x - 1}}{x}} \right)^{k + 1} \sum_{a_k  = 1}^{a_{k + 1} } {\sum_{a_{k - 1}  = 1}^{a_k } {\sum_{a_{k - 2}  = 1}^{a_{k - 1} } { \cdots \sum_{a_0  = 1}^{a_1 } {x^{a_0 } } } } }\\ 
&\qquad= \left( {\frac{{x - 1}}{x}} \right)\sum_{a_k  = 1}^{a_{k + 1} } {\left\{ {\left( {\frac{{x - 1}}{x}} \right)^k \sum_{a_{k - 1}  = 1}^{a_k } {\sum_{a_{k - 2}  = 1}^{a_{k - 1} } { \cdots \sum_{a_0  = 1}^{a_1 } {x^{a_0 } } } } } \right\}} \\
&\qquad= \left( {\frac{{x - 1}}{x}} \right)\sum_{a_k  = 1}^{a_{k + 1} } {\left\{ {x^{a_k }  - \sum_{j = 0}^{k - 1} {\left( {\frac{{x - 1}}{x}} \right)^j \binom{a_k + j - 1}j} } \right\}}, \text{by induction hypothesis $P_k$}, \\
&\qquad= \red{\frac{x - 1}x\sum_{a_k=1}^{a_{k + 1}}{x^{a_k}}} - \blue{\sum_{j = 0}^{k - 1} {\left( {\frac{{x - 1}}{x}} \right)^{j + 1} \sum_{a_k  = 1}^{a_{k + 1} } \binom{a_k + j - 1}j } }.
\end{split}
\end{equation}
Now,
\begin{equation}\label{eq.uvu2er5}
\red{\frac{{x - 1}}{x}\sum_{a_k  = 1}^{a_{k + 1} } {x^{a_k } }}  = \frac{{x - 1}}{x}\,\frac{{x^{a_{k + 1}  + 1}  - x}}{{x - 1}} = x^{a_{k + 1} }  - 1
\end{equation}
and
\begin{equation}\label{eq.ev55zio}
\begin{split}
\blue{\sum_{j = 0}^{k - 1} {\left( {\frac{{x - 1}}{x}} \right)^{j + 1} \sum_{a_k  = 1}^{a_{k + 1} } \binom{a_k + j - 1}j }}  &= \sum_{j = 0}^{k - 1} {\left( {\frac{{x - 1}}{x}} \right)^{j + 1} \binom{a_{k + 1} + j}{j + 1}},\text{by \eqref{eq.ckl3mw8}}, \\
&= \sum_{j = 1}^k {\left( {\frac{{x - 1}}{x}} \right)^j \binom{a_{k + 1} + j - 1}j}\\ 
&= \sum_{j = 0}^k {\left( {\frac{{x - 1}}{x}} \right)^j \binom{a_{k + 1} + j - 1}j}  - 1.
\end{split}
\end{equation}
Using \eqref{eq.uvu2er5} and \eqref{eq.ev55zio} in \eqref{eq.gay98lw}, we find
\[
P_{k + 1}:\quad\left( {\frac{{x - 1}}{x}} \right)^{k + 1} \sum_{a_k  = 1}^{a_{k + 1} } {\sum_{a_{k - 1}  = 1}^{a_k } {\sum_{a_{k - 2}  = 1}^{a_{k - 1} } { \cdots \sum_{a_0  = 1}^{a_1 } {x^{a_0 } } } } }=x^{a_{k + 1}} - \sum_{j = 0}^k {\left( {\frac{{x - 1}}{x}} \right)^j \binom{a_{k + 1} + j - 1}j}.
\]
Thus, $P_k\implies P_{k + 1}$.
\end{proof}
\section{Main results}\label{sec.jmspw0p}
In this section, we will apply identity \eqref{eq.bmc15xg} to derive some nested identities involving Horadam numbers.

First we wish to modify identity \eqref{eq.bmc15xg} so that each sum in the multiple sum on the left hand side starts with an arbitrary index, say $c$. To do this, we replace the sequence of integers $(a_i)$, $i=0,1,2,\ldots,n$ with $(a_i - c + 1)$, $i=0,1,2,\ldots,n$. This gives
\begin{equation}\label{eq.tms1dws}
\left( {\frac{{x - 1}}{x}} \right)^n \sum_{a_{n - 1}  = c}^{a_n } {\sum_{a_{n - 2}  = c}^{a_{n - 1} } { \cdots \sum_{a_0  = c}^{a_1 } {x^{a_0 } } } }  = x^{a_n }  - x^{c - 1}\sum_{j = 0}^{n - 1} {\left( {\frac{{x - 1}}{x}} \right)^j \binom{a_j + j - c}j}.
\end{equation}
Note that the identities in Lemma~\ref{lem.hy9navo} can also be caused to start from $j=c$ by writing $m - c + 1$ for $m$ and $j - c + 1$ for $j$, and replacing $(b_i)$, $i=0,1,2,\ldots,s$ with $(b_i - c + 1)$, $i=0,1,2,\ldots,s$, giving
\begin{equation}\label{eq.nwt0kv5}
\sum_{j=c}^m{\binom{j - c + k}k}=\binom{m - c + k + 1}{k + 1},
\end{equation}

\begin{equation}\label{eq.unc9ugp}
\sum_{b_{s - 1}  = c}^{b_s } {\sum_{b_{s - 2}  = c}^{b_{s - 1} } { \cdots \sum_{b_0  = c}^{b_1 } 1 } }  = \binom{b_s + s - c}s.
\end{equation}
The identity obtained by setting $c=0$ in \eqref{eq.unc9ugp} provided the motivation for Butler and Karasik~\cite{butler10} to look at nested sums.

Multiplying through \eqref{eq.tms1dws} by $(x/(x - 1))^n$ and writing $x/y$ for $x$ and $-x/y$ for $x$, in turn, we have the following useful versions:
\begin{equation}\tag{A}\label{eq.rjmjqrv}
\begin{split}
f(x,y;a_n,n,c)&=\sum_{a_{n - 1}  = c}^{a_n } {\sum_{a_{n - 2}  = c}^{a_{n - 1} } { \cdots \sum_{a_0  = c}^{a_1 } {\left( {\frac{x}{y}} \right)^{a_0 } } } }\\
&  = \left( {\frac{x}{{x - y}}} \right)^n \left( {\frac{x}{y}} \right)^{a_n }  - \sum_{j = 0}^{n - 1} {\left( {\frac{x}{{x - y}}} \right)^{n - j}\left(\frac xy\right)^{c - 1} \binom{a_n + j - c}j},
\end{split}
\end{equation}

\begin{equation}\tag{B}\label{eq.alkdar8}
\begin{split}
g(x,y;a_n,n,c)&=\sum_{a_{n - 1}  = c}^{a_n } {\sum_{a_{n - 2}  = c}^{a_{n - 1} } { \cdots \sum_{a_0  = c}^{a_1 } {(-1)^{a_0}\left( {\frac{x}{y}} \right)^{a_0 } } } } \\ 
&= (-1)^{a_n}\left( {\frac{x}{{x + y}}} \right)^n \left( {\frac{x}{y}} \right)^{a_n }  + (-1)^c\sum_{j = 0}^{n - 1} {\left( {\frac{x}{{x + y}}} \right)^{n - j}\left(\frac xy\right)^{c - 1} \binom{a_n + j - c}j}.
\end{split}
\end{equation}
Equipped with identities \eqref{eq.rjmjqrv} and \eqref{eq.alkdar8}, we are now ready to state the results regarding the nested sums involving Horadam numbers. Theorems~\ref{thm.zh18syg} and \ref{thm.y4vfanm} are concerned with Fibonacci and Lucas numbers while Theorems~\ref{thm.mrlteua}--\ref{thm.bgdtrf} address the general Horadam sequence.
\begin{theorem}\label{thm.zh18syg}
Let $a_n$, $s$ and $c$ be any integers and let $n$ be a positive integer. Then,
\begin{equation}\tag{F1a}\label{eq.k7jjmd9}
\sum\limits_{a_{n - 1}  = c}^{a_n } {\sum\limits_{a_{n - 2}  = c}^{a_{n - 1} } { \cdots \sum\limits_{a_0  = c}^{a_1 } {F_{3a_0  + s} } } }  = \frac{{F_{2n + 3a_n  + s} }}{{2^n }} - \sum\limits_{j = 0}^{n - 1} {\frac{{F_{2(n - j) + 3(c - 1) + s} }}{{2^{n - j} }}\binom{a_n + j - c}j},
\end{equation}

\begin{equation}\tag{F1b}\label{eq.tzix7ji}
\sum\limits_{a_{n - 1}  = c}^{a_n } {\sum\limits_{a_{n - 2}  = c}^{a_{n - 1} } { \cdots \sum\limits_{a_0  = c}^{a_1 } {L_{3a_0  + s} } } }  = \frac{{L_{2n + 3a_n  + s} }}{{2^n }} - \sum\limits_{j = 0}^{n - 1} {\frac{{L_{2(n - j) + 3(c - 1) + s} }}{{2^{n - j} }}\binom{a_n + j - c}j}.
\end{equation}
\end{theorem}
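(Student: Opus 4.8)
The plan is to reduce both identities to the master formula \eqref{eq.rjmjqrv} for $f(x,y;a_n,n,c)$ by feeding it the Binet expansion \eqref{eq.qohepwk} of the summand. Write $F_{3a_0+s}=\frac1{\sqrt5}\bigl(\alpha^s(\alpha^3)^{a_0}-\beta^s(\beta^3)^{a_0}\bigr)$ and $L_{3a_0+s}=\alpha^s(\alpha^3)^{a_0}+\beta^s(\beta^3)^{a_0}$. Since the nested summation operator is linear, the left-hand side of \eqref{eq.k7jjmd9} splits as $\frac1{\sqrt5}\bigl(\alpha^s f(\alpha^3,1;a_n,n,c)-\beta^s f(\beta^3,1;a_n,n,c)\bigr)$, while the left-hand side of \eqref{eq.tzix7ji} becomes $\alpha^s f(\alpha^3,1;a_n,n,c)+\beta^s f(\beta^3,1;a_n,n,c)$. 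Here each occurrence of $f$ is the nested sum of a single geometric term $(\alpha^3)^{a_0}$ or $(\beta^3)^{a_0}$, so no auxiliary denominator is required and only \eqref{eq.rjmjqrv} (not \eqref{eq.alkdar8}) comes into play, the summand carrying no alternating factor.

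The decisive step is the algebraic simplification that generates the factors of $2$. From $\alpha^2=\alpha+1$ one gets $\alpha^3=2\alpha+1$, hence $\alpha^3-1=2\alpha$ and therefore $\frac{\alpha^3}{\alpha^3-1}=\frac{\alpha^2}2$; the identical relation $\frac{\beta^3}{\beta^3-1}=\frac{\beta^2}2$ holds for $\beta$. Substituting $x/y=\alpha^3$ into \eqref{eq.rjmjqrv} — which depends on $x,y$ only through the ratio $x/y$, since $x/(x-y)=(x/y)/((x/y)-1)$ — the leading term becomes $\bigl(\tfrac{\alpha^2}2\bigr)^n(\alpha^3)^{a_n}=\alpha^{2n+3a_n}/2^n$ and the generic summand becomes $\bigl(\tfrac{\alpha^2}2\bigr)^{n-j}(\alpha^3)^{c-1}\binom{a_n+j-c}j=\bigl(\alpha^{2(n-j)+3(c-1)}/2^{n-j}\bigr)\binom{a_n+j-c}j$, with the parallel expressions in $\beta$.

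Finally I would reassemble the $\alpha$- and $\beta$-contributions. Multiplying by $\alpha^s$, respectively $\beta^s$, and forming the combination dictated by Binet turns $\alpha^{2n+3a_n+s}\mp\beta^{2n+3a_n+s}$ back into $\sqrt5\,F_{2n+3a_n+s}$ or $L_{2n+3a_n+s}$, and likewise each summand index collapses to $2(n-j)+3(c-1)+s$; dividing by $\sqrt5$ in the Fibonacci case yields \eqref{eq.k7jjmd9}, while the Lucas case gives \eqref{eq.tzix7ji} verbatim. The only genuine labour is the index bookkeeping in this recombination and checking that the $2^{n-j}$ denominators line up; the conceptual crux — and the sole place where the multiplier $3$ is used — is the identity $\alpha^3-1=2\alpha$, which is exactly what makes the single-ratio application of \eqref{eq.rjmjqrv} collapse into a clean Fibonacci/Lucas closed form. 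Since \eqref{eq.qohepwk} is valid for every integer index, no restriction on $a_n$, $s$, or $c$ is introduced.
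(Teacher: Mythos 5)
Your proposal is correct and follows essentially the same route as the paper: apply \eqref{eq.rjmjqrv} with $x/y=\alpha^3$ and $x/y=\beta^3$, combine via the Binet formulas \eqref{eq.qohepwk}, and use $\alpha^3-1=2\alpha$, $\beta^3-1=2\beta$ to produce the powers of $2$. The paper's proof is a one-line instruction to simplify $f(\alpha^3,1;a_n,n,c)\mp f(\beta^3,1;a_n,n,c)$; you have merely supplied the details (including the $\alpha^s$, $\beta^s$ weights and the $1/\sqrt5$ that the paper leaves implicit), so there is nothing to correct.
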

\begin{proof}
Consider identity \eqref{eq.rjmjqrv}. Simplify both sides of
\[
f(\alpha^3,1;a_n,n,c)\mp f(\beta^3,1;a_n,n,c),
\]
using the Binet formulas~\eqref{eq.qohepwk}.
\end{proof}
\begin{theorem}\label{thm.y4vfanm}
Let $a_n$, $s$ and $c$ be any integers and let $n$ be a positive integer. Then,
\begin{equation}\tag{F2a}\label{eq.g9klvw1}
\sum\limits_{a_{n - 1}  = c}^{a_n } {\sum\limits_{a_{n - 2}  = c}^{a_{n - 1} } { \cdots \sum\limits_{a_0  = c}^{a_1 } {( - 1)^{a_0 } F_{3a_0  + s} } } }  = ( - 1)^{a_n } \frac{{F_{n + 3a_n  + s} }}{{2^n }} + ( - 1)^c \sum\limits_{j = 0}^{n - 1} {\frac{{F_{n - j + 3(c - 1) + s} }}{{2^{n - j} }}\binom{a_n + j - c}j},
\end{equation}

\begin{equation}\tag{F2b}\label{eq.y56t51f}
\sum\limits_{a_{n - 1}  = c}^{a_n } {\sum\limits_{a_{n - 2}  = c}^{a_{n - 1} } { \cdots \sum\limits_{a_0  = c}^{a_1 } {( - 1)^{a_0 } L_{3a_0  + s} } } }  = ( - 1)^{a_n } \frac{{L_{n + 3a_n  + s} }}{{2^n }} + ( - 1)^c \sum\limits_{j = 0}^{n - 1} {\frac{{L_{n - j + 3(c - 1) + s} }}{{2^{n - j} }}\binom{a_n + j - c}j}.
\end{equation}
\end{theorem}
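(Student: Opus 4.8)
The plan is to mimic the proof of Theorem~\ref{thm.zh18syg} almost verbatim, except that the alternating factor $(-1)^{a_0}$ now forces the use of the master identity \eqref{eq.alkdar8} in place of \eqref{eq.rjmjqrv}. Resolving the Fibonacci and Lucas numbers through the Binet formulas \eqref{eq.qohepwk}, namely $F_{3a_0+s}=(\alpha^{3a_0+s}-\beta^{3a_0+s})/\sqrt5$ and $L_{3a_0+s}=\alpha^{3a_0+s}+\beta^{3a_0+s}$, each nested sum splits into two nested sums of the type $g$. Concretely,
\[
\sum\limits_{a_{n-1}=c}^{a_n}\!\cdots\!\sum\limits_{a_0=c}^{a_1}(-1)^{a_0}F_{3a_0+s}=\frac{\alpha^s g(\alpha^3,1;a_n,n,c)-\beta^s g(\beta^3,1;a_n,n,c)}{\sqrt5},
\]
with the companion Lucas sum given by the same expression but with the difference replaced by the sum $\alpha^s g(\alpha^3,1;a_n,n,c)+\beta^s g(\beta^3,1;a_n,n,c)$ and with the prefactor $1/\sqrt5$ removed. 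Thus everything reduces to evaluating $g(\alpha^3,1;\cdot)$ and $g(\beta^3,1;\cdot)$ and then forming the $\mp$ combination.

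The crux is the algebraic simplification of the factor $x/(x+y)$ occurring in \eqref{eq.alkdar8}. Setting $y=1$ and $x=\alpha^3$, and using the defining relation $\alpha^2=\alpha+1$ (so that $\alpha^3=2\alpha+1$), one obtains $\alpha^3+1=2(\alpha+1)=2\alpha^2$, whence
\[
\frac{\alpha^3}{\alpha^3+1}=\frac{\alpha^3}{2\alpha^2}=\frac{\alpha}{2},
\]
and likewise $\beta^3/(\beta^3+1)=\beta/2$. This is exactly what converts the otherwise unwieldy powers $\bigl(x/(x+y)\bigr)^n$ and $\bigl(x/(x+y)\bigr)^{n-j}$ into the clean $\alpha^n/2^n$ and $\alpha^{n-j}/2^{n-j}$ that produce the $2^{-n}$ and $2^{-(n-j)}$ denominators appearing in \eqref{eq.g9klvw1}--\eqref{eq.y56t51f}. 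I expect this factorisation to be the only real obstacle: it is the special feature of the pair $(p,q)=(1,-1)$ together with the exponent $3$ that makes the right-hand side collapse so neatly, and the analogous step is precisely what will be less tidy in the general Horadam theorems to come.

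With these substitutions, identity \eqref{eq.alkdar8} gives
\[
g(\alpha^3,1;a_n,n,c)=(-1)^{a_n}\frac{\alpha^{n+3a_n}}{2^n}+(-1)^c\sum_{j=0}^{n-1}\frac{\alpha^{(n-j)+3(c-1)}}{2^{n-j}}\binom{a_n+j-c}{j},
\]
and the same formula with $\alpha$ replaced by $\beta$. Forming $\alpha^s g(\alpha^3,1;\cdot)\mp\beta^s g(\beta^3,1;\cdot)$ multiplies each $\alpha$-power by $\alpha^s$ and each $\beta$-power by $\beta^s$, so that every pair $\alpha^m\mp\beta^m$ recombines, via \eqref{eq.qohepwk}, into $\sqrt5\,F_m$ (for the difference) or $L_m$ (for the sum). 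The leading term yields $(-1)^{a_n}F_{n+3a_n+s}/2^n$ (respectively $(-1)^{a_n}L_{n+3a_n+s}/2^n$), and the $j$-sum yields $(-1)^c\sum_{j}F_{n-j+3(c-1)+s}\binom{a_n+j-c}{j}/2^{n-j}$ (respectively its Lucas analogue). Dividing the Fibonacci combination by $\sqrt5$ then produces \eqref{eq.g9klvw1} and \eqref{eq.y56t51f} directly, completing the argument.
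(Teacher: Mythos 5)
Your proposal is correct and follows essentially the same route as the paper, which likewise proves the theorem by simplifying $g(\alpha^3,1;a_n,n,c)\mp g(\beta^3,1;a_n,n,c)$ via identity \eqref{eq.alkdar8} and the Binet formulas \eqref{eq.qohepwk}. You are in fact slightly more explicit than the paper's one-line proof: you supply the $\alpha^s$, $\beta^s$ weights needed to account for the shift $s$ and spell out the key simplification $\alpha^3/(\alpha^3+1)=\alpha/2$, both of which check out.
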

\begin{proof}
Consider identity \eqref{eq.alkdar8}. Simplify both sides of
\[
g(\alpha^3,1;a_n,n,c)\mp g(\beta^3,1;a_n,n,c),
\]
using the Binet formulas~\eqref{eq.qohepwk}.
\end{proof}
\begin{theorem}\label{thm.mrlteua}
Let $r$, $s$, $c$ and $a_n$ be any integers and let $n$ be a positive integer. Then,
\begin{equation}\tag{F3}\label{eq.bouizyw}
\begin{split}
&\sum_{a_{n - 1}  = c}^{a_n } {\sum_{a_{n - 2}  = c}^{a_{n - 1} } { \cdots \sum_{a_0  = c}^{a_1 } {\frac{{W_{ra_0  + s} }}{{V_r^{a_0 } }}} } }\\
&\qquad  = ( - 1)^n \frac{{W_{r(a_n  + 2n) + s} }}{{q^{rn} V_r^{a_n } }} - \frac1{V_r^{c - 1}}\sum_{j = 0}^{n - 1} {( - 1)^{n - j} \frac{{W_{r(2n - 2j + c - 1) + s} }}{{q^{r(n - j)} }}\binom{a_n + j - c}j}.
\end{split}
\end{equation}
 
\end{theorem}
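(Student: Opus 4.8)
The plan is to follow the template of Theorems~\ref{thm.zh18syg} and~\ref{thm.y4vfanm}, applying identity~\eqref{eq.rjmjqrv} to the full Horadam Binet formula~\eqref{eq.mt3covv} instead of to the Fibonacci/Lucas one. The decisive move is to let the denominator $V_r^{a_0}$ dictate the second argument of $f$: I would take $y=V_r$, because with $V_r=\tau^r+\sigma^r$ the difference $x-y$ collapses to a single power of $\tau$ or $\sigma$, which is exactly what produces the clean $q^{r(\,\cdot\,)}$ denominators in~\eqref{eq.bouizyw}.

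First I would split the summand by Binet,
\[
\frac{W_{ra_0+s}}{V_r^{a_0}}=\mathbb A\tau^s\left(\frac{\tau^r}{V_r}\right)^{a_0}+\mathbb B\sigma^s\left(\frac{\sigma^r}{V_r}\right)^{a_0},
\]
so that, by linearity of the nested sum,
\[
\sum_{a_{n-1}=c}^{a_n}\cdots\sum_{a_0=c}^{a_1}\frac{W_{ra_0+s}}{V_r^{a_0}}
=\mathbb A\tau^s\,f(\tau^r,V_r;a_n,n,c)+\mathbb B\sigma^s\,f(\sigma^r,V_r;a_n,n,c).
\]
Each $f$ is then read off directly from~\eqref{eq.rjmjqrv}.

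Next I would carry out the simplification that makes the whole thing work: since $\tau^r-V_r=-\sigma^r$ and $\sigma^r-V_r=-\tau^r$, the prefactors become $\tau^r/(\tau^r-V_r)=-\tau^r/\sigma^r$ and $\sigma^r/(\sigma^r-V_r)=-\sigma^r/\tau^r$. Invoking $\tau^r\sigma^r=q^r$ to rewrite $\tau^{rm}/\sigma^{rm}=\tau^{2rm}/q^{rm}$ (and symmetrically for $\sigma$), the leading term of $f(\tau^r,V_r;a_n,n,c)$ becomes $(-1)^n\tau^{r(a_n+2n)}/(q^{rn}V_r^{a_n})$, while its $j$-th summand carries $\tau^{r(2n-2j+c-1)}/q^{r(n-j)}$ together with the weight $(-1)^{n-j}\binom{a_n+j-c}j$; the $\sigma$-expansion is identical under $\tau\mapsto\sigma$.

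Finally I would recombine. Each $\tau$-power enters weighted by $\mathbb A\tau^s$ and each $\sigma$-power by $\mathbb B\sigma^s$, so every bracket $\mathbb A\tau^{rm+s}+\mathbb B\sigma^{rm+s}$ folds back into $W_{rm+s}$ via~\eqref{eq.mt3covv}; the leading term yields $(-1)^nW_{r(a_n+2n)+s}/(q^{rn}V_r^{a_n})$ and the $j$-th summand yields $W_{r(2n-2j+c-1)+s}$, reproducing~\eqref{eq.bouizyw}. I expect the only genuine obstacle to be exponent bookkeeping: getting the shift $2(n-j)+c-1$ and the matching power $q^{r(n-j)}$ to line up, since a stray factor of $q^r$ or a sign from $(-1)^{n-j}$ is the easiest thing to misplace, so that is the step I would verify most carefully. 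Because~\eqref{eq.mt3covv} presupposes $\tau\ne\sigma$, this establishes~\eqref{eq.bouizyw} on the non-degenerate locus; the identity then extends to all admissible complex $a,b,p,q$ with $V_r\ne0$, since both sides are rational functions of the parameters.
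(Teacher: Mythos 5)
Your proposal is correct and follows exactly the paper's own route: the paper's proof is precisely to simplify $\mathbb A\tau^s f(\tau^r,V_r;a_n,n,c)+\mathbb B\sigma^s f(\sigma^r,V_r;a_n,n,c)$ via the Binet formulas, and your computation with $\tau^r-V_r=-\sigma^r$, $\sigma^r-V_r=-\tau^r$ and $\tau^r\sigma^r=q^r$ fills in the details correctly. The only addition is your closing remark on extending from the non-degenerate case, which the paper leaves implicit.
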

\begin{proof}
Refer to identity \eqref{eq.rjmjqrv}. Use the Binet formulas~\eqref{eq.mt3covv} to simplify both sides of
\[
\mathbb A\tau ^s f(\tau ^r ,V_r ;a_n ,n,c) + \mathbb B\sigma ^s f(\sigma ^r ,V_r ;a_n ,n,c).
\]

\end{proof}
The restricted Horadam sequence version of \eqref{eq.bouizyw} is
\[
\begin{split}
&\sum_{a_{n - 1}  = c}^{a_n } {\sum_{a_{n - 2}  = c}^{a_{n - 1} } { \cdots \sum_{a_0  = c}^{a_1 } {\frac{{w_{ra_0  + s} }}{{v_r^{a_0 } }}} } }\\
&\qquad  = ( - 1)^n \frac{{w_{r(a_n  + 2n) + s} }}{{q^{rn} v_r^{a_n } }} - \frac1{v_r^{c - 1}}\sum_{j = 0}^{n - 1} {( - 1)^{n - j} \frac{{w_{r(2n - 2j + c - 1) + s} }}{{q^{r(n - j)} }}\binom{a_n + j - c}j}.
\end{split}
\]
In particular,
\[
\begin{split}
&\sum_{a_{n - 1}  = c}^{a_n } {\sum_{a_{n - 2}  = c}^{a_{n - 1} } { \cdots \sum_{a_0  = c}^{a_1 } {w_{a_0  + s}} } }\\
&\qquad  = ( - 1)^n \frac{{w_{a_n  + 2n + s} }}{{q^n }} - \sum_{j = 0}^{n - 1} {( - 1)^{n - j} \frac{{w_{2n - 2j + c - 1 + s} }}{{q^{n - j} }}\binom{a_n + j - c}j}.
\end{split}
\]
The gibonacci version of \eqref{eq.bouizyw} is
\[
\begin{split}
&\sum_{a_{n - 1}  = c}^{a_n } {\sum_{a_{n - 2}  = c}^{a_{n - 1} } { \cdots \sum_{a_0  = c}^{a_1 } {\frac{{G_{ra_0  + s} }}{{L_r^{a_0 } }}} } }\\
&\qquad  = ( - 1)^{n( r - 1)} \frac{{G_{r(a_n  + 2n) + s} }}{{L_r^{a_n } }} - \frac1{L_r^{c - 1}}\sum_{j = 0}^{n - 1} {( - 1)^{(n - j)(r - 1)} G_{r(2n - 2j + c - 1) + s}\binom{a_n + j - c}j};
\end{split}
\]
a particular case of which ($r=1$, $s=0$) is
\[
\sum_{a_{n - 1}  = c}^{a_n } {\sum_{a_{n - 2}  = c}^{a_{n - 1} } { \cdots \sum_{a_0  = c}^{a_1 } {G_{a_0 } } } }  = G_{a_n  + 2n}  - \sum_{j = 0}^{n - 1} {G_{2(n - j)} \binom{a_n + j - c}j};
\]
of which \eqref{eq.b1al0dl} is a special case ($c=1$).
\begin{theorem}
Let $r$, $s$, $c$ and $a_n$ be any integers and let $n$ be a positive integer. Then,
\begin{equation}\tag{F4}\label{eq.rnqmj4n}
\begin{split}
\sum_{a_{n - 1}  = c}^{a_n } {\sum_{a_{n - 2}  = c}^{a_{n - 1} } { \cdots \sum_{a_0  = c}^{a_1 } {\frac{{( - 1)^{a_0 } W_{2ra_0  + s} }}{{q^{ra_0 } }}} } }  &= ( - 1)^{a_n } \frac{{W_{r(2a_n  + n) + s} }}{{q^{ra_n } V_r^n }}\\
&\qquad+ \frac{{( - 1)^c }}{{q^{r(c - 1)} }}\sum_{j = 0}^{n - 1} {\frac{{W_{r(n - j + 2c - 2) + s} }}{{V_r^{n - j} }}\binom{a_n + j - c}j}.
\end{split}
\end{equation}
\end{theorem}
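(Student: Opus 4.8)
The plan is to mimic the proof of Theorem~\ref{thm.mrlteua}, but to start from identity~\eqref{eq.alkdar8} rather than \eqref{eq.rjmjqrv}, and to choose the arguments of $g$ so that the base of the geometric-type term reproduces $\tau^{2r}/q^r$ (respectively $\sigma^{2r}/q^r$). Concretely, I would apply the Binet formula~\eqref{eq.mt3covv} to the left-hand side and form the combination
\[
\mathbb A\tau^s\, g(\tau^{2r},q^r;a_n,n,c)+\mathbb B\sigma^s\, g(\sigma^{2r},q^r;a_n,n,c).
\]

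The first task is to verify that this combination really is the left-hand side of \eqref{eq.rnqmj4n}. Writing $W_{2ra_0+s}=\mathbb A\tau^{2ra_0+s}+\mathbb B\sigma^{2ra_0+s}$ and using $q^{ra_0}=(q^r)^{a_0}$, the summand $(-1)^{a_0}W_{2ra_0+s}/q^{ra_0}$ splits as $\mathbb A\tau^s(-1)^{a_0}(\tau^{2r}/q^r)^{a_0}+\mathbb B\sigma^s(-1)^{a_0}(\sigma^{2r}/q^r)^{a_0}$, which is exactly the summand occurring in $g(\tau^{2r},q^r;\ldots)$ and $g(\sigma^{2r},q^r;\ldots)$ after scaling by $\mathbb A\tau^s$ and $\mathbb B\sigma^s$. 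Thus the combination above reproduces the nested sum on the left of \eqref{eq.rnqmj4n} term by term.

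Next I would substitute into the right-hand side of \eqref{eq.alkdar8}. The one step that must be \emph{noticed} rather than merely computed is the evaluation of $x+y$: with $x=\tau^{2r}$ and $y=q^r=\tau^r\sigma^r$ one has $x+y=\tau^r(\tau^r+\sigma^r)=\tau^rV_r$, so that $x/(x+y)=\tau^r/V_r$; symmetrically $\sigma^{2r}+q^r=\sigma^rV_r$, giving $\sigma^r/V_r$. This factorization is what converts the $x/(x+y)$ factors of \eqref{eq.alkdar8} into powers of $\tau^r/V_r$ and $\sigma^r/V_r$, and hence produces the powers of $V_r$ appearing on the right of \eqref{eq.rnqmj4n}. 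With this in hand, the $\tau$- and $\sigma$-contributions share the common factors $V_r^{-n}$ in the leading term and $V_r^{-(n-j)}$ in the $j$-sum, so recombining $\mathbb A\tau^{(\cdots)}+\mathbb B\sigma^{(\cdots)}$ back into a single Horadam number is immediate.

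Carrying out the bookkeeping, the leading term becomes
\[
\frac{(-1)^{a_n}}{q^{ra_n}V_r^{n}}\bigl(\mathbb A\tau^{r(2a_n+n)+s}+\mathbb B\sigma^{r(2a_n+n)+s}\bigr)=(-1)^{a_n}\frac{W_{r(2a_n+n)+s}}{q^{ra_n}V_r^{n}},
\]
while the remaining sum collapses (after absorbing $(x/y)^{c-1}=\tau^{2r(c-1)+s}/q^{r(c-1)}$ into the exponents) to
\[
\frac{(-1)^c}{q^{r(c-1)}}\sum_{j=0}^{n-1}\frac{W_{r(n-j+2c-2)+s}}{V_r^{\,n-j}}\binom{a_n+j-c}{j},
\]
which is precisely \eqref{eq.rnqmj4n}. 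I anticipate no genuine obstacle: the entire argument is the algebraic recombination of Binet exponents, the sole conceptual point being the identity $\tau^{2r}+q^r=\tau^rV_r$ that drives the appearance of $V_r$ on the right-hand side.
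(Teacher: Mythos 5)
Your proposal is correct and is essentially the paper's own proof: the paper forms $\mathbb A\tau^s g(\tau^r,\sigma^r;a_n,n,c)+\mathbb B\sigma^s g(\sigma^r,\tau^r;a_n,n,c)$, and since $g(x,y;\cdot)$ depends on $x$ and $y$ only through the ratio $x/y$, your choice $g(\tau^{2r},q^r;\cdot)$ coincides with $g(\tau^r,\sigma^r;\cdot)$ because $\tau^{2r}/q^r=\tau^r/\sigma^r$. Your key observation $\tau^{2r}+q^r=\tau^rV_r$ is exactly the factorization $\tau^r+\sigma^r=V_r$ in the paper's parametrization, and the resulting bookkeeping is identical.
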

\begin{proof}
Refer to~\eqref{eq.alkdar8} and simplify
\[
\mathbb A\tau^sg(\tau^r,\sigma^r;a_n,n,c) + \mathbb B\sigma^sg(\sigma^r,\tau^r;a_n,n,c).
\]
\end{proof}
The gibonacci version of \eqref{eq.rnqmj4n} is
\[
\begin{split}
\sum_{a_{n - 1}  = c}^{a_n } {\sum_{a_{n - 2}  = c}^{a_{n - 1} } { \cdots \sum_{a_0  = c}^{a_1 } {( - 1)^{(r - 1)a_0 } G_{2ra_0  + s} } } }  &= ( - 1)^{(r - 1)a_n } \frac{G_{r(2a_n  + n) + s}}{L_r^n}\\ 
&\qquad + ( - 1)^{r(c - 1) + c} \sum_{j = 0}^{n - 1} {\frac{{G_{r(n - j + 2c - 2) + s} }}{{L_r^{n - j} }}\binom{a_n + j - c}j}. 
\end{split}
\]
In particular,
\[
\sum_{a_{n - 1}  = c}^{a_n } {\sum_{a_{n - 2}  = c}^{a_{n - 1} } { \cdots \sum_{a_0  = c}^{a_1 } {G_{2a_0  + s} } } }  = G_{2a_n  + n + s}  - \sum_{j = 0}^{n - 1} {G_{n - j + 2c - 2 + s} \binom{a_n + j - c}j} .
\]
For the proof of Theorems \ref{thm.l2xlubb} and \ref{thm.xgurb9n}, we require the identities stated in Lemma~\ref{lem.p2d19eb}.
\begin{lemma}\label{lem.p2d19eb}
Let $r$ and $d$ be any integers. Then,
\begin{gather}
U_{r + d}  - \tau ^r U_d  = \sigma ^d U_r\tag{L1}\label{eq.iamiky1},\\ 
U_{r + d}  - \sigma ^r U_d  = \tau ^d U_r\tag{L2}\label{eq.zy0gfyn},\\ 
V_{r + d}  - \tau ^r V_d  =  - \sigma ^d U_r \Delta\tag{L3}\label{eq.j428hfx},\\ 
V_{r + d}  - \sigma ^r V_d  = \tau ^d U_r \Delta\tag{L4}\label{eq.cqli6xc}.
\end{gather}

\end{lemma}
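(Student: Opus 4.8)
The plan is to prove all four identities by direct substitution of the Binet formulas \eqref{eq.mt3covv}, which express everything in terms of the two roots $\tau$ and $\sigma$. Writing $U_j\Delta=\tau^j-\sigma^j$ and $V_j=\tau^j+\sigma^j$, each identity collapses to an elementary algebraic identity in $\tau$ and $\sigma$; no appeal to the recurrence \eqref{eq.vhrb5b3} is needed.

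For \eqref{eq.iamiky1} I would multiply both sides by $\Delta$ and insert the Binet forms. The terms $\tau^{r+d}$ cancel, leaving $\tau^r\sigma^d-\sigma^{r+d}$ on the left, which factors as $\sigma^d(\tau^r-\sigma^r)=\sigma^d U_r\Delta$; dividing by $\Delta$ recovers \eqref{eq.iamiky1}. For \eqref{eq.j428hfx} no multiplication is required: substituting $V_{r+d}=\tau^{r+d}+\sigma^{r+d}$ and $\tau^rV_d=\tau^{r+d}+\tau^r\sigma^d$, the $\tau^{r+d}$ terms cancel and what remains is $\sigma^{r+d}-\tau^r\sigma^d=\sigma^d(\sigma^r-\tau^r)=-\sigma^d U_r\Delta$, which is exactly the stated right-hand side.

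The remaining two identities require no fresh computation. Interchanging the roles of $\tau$ and $\sigma$ sends $U_r\mapsto-U_r$ and $\Delta\mapsto-\Delta$ (so the product $U_r\Delta$ is invariant) while $V_r$ is unchanged; applying this swap to \eqref{eq.iamiky1} yields \eqref{eq.zy0gfyn}, and applying it to \eqref{eq.j428hfx} yields \eqref{eq.cqli6xc}. Thus the symmetry $\tau\leftrightarrow\sigma$ halves the work, reducing the lemma to the two computations above.

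There is essentially no obstacle here beyond sign bookkeeping: the only point demanding care is the factor of $-1$ appearing in \eqref{eq.j428hfx}--\eqref{eq.cqli6xc}, which arises precisely because $U_r$ is antisymmetric under $\tau\leftrightarrow\sigma$ whereas $V_r$ is symmetric. One minor caveat is that the Binet formulas \eqref{eq.mt3covv} are stated only in the non-degenerate case $p^2-4q>0$ (equivalently $\tau\neq\sigma$); but since \eqref{eq.iamiky1}--\eqref{eq.cqli6xc} are polynomial identities in $p$ and $q$, their validity on the open set $p^2-4q>0$ forces them to hold identically, so the degenerate case needs no separate treatment.
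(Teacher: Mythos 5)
Your overall approach---direct substitution of the Binet formulas---is exactly the paper's, and your explicit computations for \eqref{eq.iamiky1} and \eqref{eq.j428hfx} are correct. The problem is the symmetry shortcut used for the other two identities. Under the interchange $\tau\leftrightarrow\sigma$, the quantity $U_r=(\tau^r-\sigma^r)/(\tau-\sigma)$ is \emph{invariant} (numerator and denominator both change sign), not antisymmetric, while $\Delta=\tau-\sigma$ changes sign; hence $U_r\Delta=\tau^r-\sigma^r$ is antisymmetric, not invariant as you claim. Your stated rules ($U_r\mapsto -U_r$, $\Delta\mapsto-\Delta$, $U_r\Delta$ invariant) are inconsistent with the Binet formula, and applied literally to \eqref{eq.j428hfx} they produce $V_{r+d}-\sigma^r V_d=-\tau^d U_r\Delta$, which is the \emph{negation} of \eqref{eq.cqli6xc}. (For \eqref{eq.iamiky1}$\Rightarrow$\eqref{eq.zy0gfyn} your two sign errors happen to cancel.) The symmetry idea is sound once the transformation is corrected---$U_r$ and $V_r$ fixed, $\Delta\mapsto-\Delta$, so the right-hand sides $\mp\sigma^d U_r\Delta$ and $\pm\tau^d U_r\Delta$ exchange correctly---or you can simply repeat the two-line direct computation, which is all the paper does (it writes out \eqref{eq.iamiky1} and declares the remaining three analogous).

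A minor further point: your closing remark that \eqref{eq.iamiky1}--\eqref{eq.cqli6xc} are ``polynomial identities in $p$ and $q$'' is not accurate, since each identity contains $\tau^r$ and $\sigma^d$ separately and these are not polynomials in $p,q$. The correct observation is that, after multiplying through by $\Delta$, each identity becomes a (Laurent) polynomial identity in $\tau$ and $\sigma$ themselves; in any case the paper works in the non-degenerate setting throughout, so no extension to the degenerate case is required.
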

\begin{proof}
Each identity follows directly from the Binet formulas. For example, to prove \eqref{eq.iamiky1}, we have
\[
\begin{split}
U_{r + d}  - \tau ^r U_d & = \frac{{\tau ^{r + d}  - \sigma ^{r + d} }}{{\tau  - \sigma }} - \tau ^r \frac{{\tau ^d  - \sigma ^d }}{{\tau  - \sigma }}\\
 &= \frac{1}{{\tau  - \sigma }}\left( {\tau ^{r + d}  - \sigma ^{r + d}  - \tau ^{r + d}  + \tau ^r \sigma ^d } \right)\\
 &= \frac{{\sigma ^d }}{{\tau  - \sigma }}\left( {\tau ^r  - \sigma ^r } \right) = \sigma ^d U_r .
\end{split}
\]
\end{proof}
\begin{theorem}\label{thm.l2xlubb}
Let $r$, $s$, $c$, $d$ and $a_n$ be any integers; $r\ne 0$, $r + d\ne 0$. Let $n$ be a positive integer. Then,
\begin{equation}\tag{F5}\label{eq.x0drnxk}
\begin{split}
&\sum_{a_{n - 1}  = c}^{a_n } {\sum_{a_{n - 2}  = c}^{a_{n - 1} } { \cdots \sum_{a_0  = c}^{a_1 } {\left( {\frac{{U_d }}{{U_{r + d} }}} \right)^{a_0 } W_{ra_0  + s} } } }\\
&\qquad  = \frac{{( - 1)^n U_d^{n + a_n } }}{{q^{dn} U_r^n U_{r + d}^{a_n } }}W_{(r + d)n + ra_n  + s} \\
&\quad\qquad - \left( {\frac{{U_d }}{{U_{r + d} }}} \right)^{c - 1} \sum_{j = 0}^{n - 1} {\frac{{( - 1)^{n - j} }}{{q^{d(n - j)} }}\left( {\frac{{U_d }}{{U_r }}} \right)^{n - j} W_{r(n - j + c - 1) + d(n - j) + s} \binom{a_n + j - c}j}. 
\end{split}
\end{equation}
\end{theorem}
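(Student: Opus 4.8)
The plan is to follow the template already used for Theorems~\ref{thm.mrlteua} and the result labelled~\eqref{eq.rnqmj4n}: expand the Horadam term by its Binet formula~\eqref{eq.mt3covv} so that the left-hand summand becomes a linear combination of two geometric-type summands, each of which is handled by identity~\eqref{eq.rjmjqrv}. Writing $W_{ra_0+s}=\mathbb A\tau^{ra_0+s}+\mathbb B\sigma^{ra_0+s}$ gives
\[
\left(\frac{U_d}{U_{r+d}}\right)^{a_0}W_{ra_0+s}=\mathbb A\tau^s\left(\frac{U_d\tau^r}{U_{r+d}}\right)^{a_0}+\mathbb B\sigma^s\left(\frac{U_d\sigma^r}{U_{r+d}}\right)^{a_0},
\]
so the nested sum equals
\[
\mathbb A\tau^s f(U_d\tau^r,U_{r+d};a_n,n,c)+\mathbb B\sigma^s f(U_d\sigma^r,U_{r+d};a_n,n,c),
\]
with $f$ as in~\eqref{eq.rjmjqrv}.

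First I would substitute $x=U_d\tau^r$, $y=U_{r+d}$ into~\eqref{eq.rjmjqrv} for the first piece and $x=U_d\sigma^r$, $y=U_{r+d}$ for the second. The decisive simplification is the factor $x/(x-y)$: here Lemma~\ref{lem.p2d19eb} does exactly the work needed, since~\eqref{eq.iamiky1} gives $U_d\tau^r-U_{r+d}=-(U_{r+d}-\tau^rU_d)=-\sigma^dU_r$ and~\eqref{eq.zy0gfyn} gives $U_d\sigma^r-U_{r+d}=-\tau^dU_r$. Next I would invoke $\tau\sigma=q$ to replace $\sigma^d=q^d/\tau^d$ and $\tau^d=q^d/\sigma^d$, obtaining
\[
\frac{U_d\tau^r}{U_d\tau^r-U_{r+d}}=-\frac{U_d\tau^{r+d}}{q^dU_r},\qquad \frac{U_d\sigma^r}{U_d\sigma^r-U_{r+d}}=-\frac{U_d\sigma^{r+d}}{q^dU_r}.
\]

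Raising these to the powers $n$ and $n-j$ prescribed by~\eqref{eq.rjmjqrv} yields the common prefactors $(-1)^nU_d^n/(q^{dn}U_r^n)$ and $(-1)^{n-j}U_d^{n-j}/(q^{d(n-j)}U_r^{n-j})$, while the surviving powers of $\tau$ and $\sigma$ accumulate into the exponents $(r+d)n+ra_n+s$ in the leading term and $(r+d)(n-j)+r(c-1)+s$ in the summand. Recombining via $\mathbb A\tau^{(\cdot)}+\mathbb B\sigma^{(\cdot)}=W_{(\cdot)}$ then produces $W_{(r+d)n+ra_n+s}$ and $W_{r(n-j+c-1)+d(n-j)+s}$, matching~\eqref{eq.x0drnxk}. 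The main obstacle is entirely bookkeeping: distributing $\tau^{r+d}$, $q^d$, and $U_r$ correctly between numerator and denominator across the two exponentiations, and verifying that the two arithmetic progressions of subscripts collapse to the single claimed $W$-index. The hypotheses $r\ne0$ and $r+d\ne0$ are what render the ratios $U_d/U_{r+d}$ and the divisions by $U_r$ meaningful, so every manipulation above is legitimate.
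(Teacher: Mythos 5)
Your proposal is correct and coincides with the paper's own proof, which likewise forms $\mathbb A\tau ^s f(\tau ^r U_d ,U_{r + d} ;a_n ,n,c) + \mathbb B\sigma ^s f(\sigma ^r U_d ,U_{r + d} ;a_n ,n,c)$ and simplifies it via the Binet formulas together with \eqref{eq.iamiky1} and \eqref{eq.zy0gfyn}. In fact you supply more of the bookkeeping (the evaluation of $x/(x-y)$ as $-U_d\tau^{r+d}/(q^dU_r)$ and the collapse of the exponents) than the paper's two-line sketch does.
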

\begin{proof}
Refer to identity \eqref{eq.rjmjqrv} and simplify both sides of
\[
\mathbb A\tau ^s f(\tau ^r U_d ,U_{r + d} ;a_n ,n) + \mathbb B\sigma ^s f(\sigma ^r U_d ,U_{r + d} ;a_n ,n),
\]
using the Binet formulas and \eqref{eq.iamiky1} and \eqref{eq.zy0gfyn}.
\end{proof}
Note that when $d=r$, \eqref{eq.x0drnxk} reduces to \eqref{eq.bouizyw}.

The gibonacci version of \eqref{eq.x0drnxk} is
\[
\begin{split}
&\sum_{a_{n - 1}  = c}^{a_n } {\sum_{a_{n - 2}  = c}^{a_{n - 1} } { \cdots \sum_{a_0  = c}^{a_1 } {\left( {\frac{{F_d }}{{F_{r + d} }}} \right)^{a_0 } G_{ra_0  + s} } } }\\
&\qquad  = \frac{{( - 1)^{n(d+1)} F_d^{n + a_n } }}{{F_r^n F_{r + d}^{a_n } }}G_{(r + d)n + ra_n  + s} \\
&\quad\qquad - \left( {\frac{{F_d }}{{F_{r + d} }}} \right)^{c - 1} \sum_{j = 0}^{n - 1} {( - 1)^{(n - j)(d + 1)}\left( {\frac{{F_d }}{{F_r }}} \right)^{n - j} G_{r(n - j + c - 1) + d(n - j) + s} \binom{a_n + j - c}j}. 
\end{split}
\]
The identity stated in Lemma~\ref{lem.w65xm59} is required in the proof of Theorem~\ref{thm.xgurb9n}.
\begin{lemma}[{\cite[Lemma 1]{adegoke21}}]\label{lem.w65xm59}
For integer $j$,
\[
\mathbb A\tau ^j  - \mathbb B\sigma ^j  = \frac{{w_{j + 1}  - qw_{j - 1} }}{\Delta }.
\]
\end{lemma}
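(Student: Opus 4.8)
The plan is to prove the identity by direct appeal to the Binet representation of the restricted Horadam sequence. Since $(w_j)=(W_j(a,b;1,q))$ is the special case $p=1$ of the Horadam sequence, its terms admit the form $w_j=\mathbb A\tau^j+\mathbb B\sigma^j$, where now $\tau$ and $\sigma$ are the zeros of $x^2-x+q$, satisfying $\tau+\sigma=1$ and, crucially, $\tau\sigma=q$. The strategy is simply to expand the right-hand side, $w_{j+1}-qw_{j-1}$, using these expressions and collect terms.

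First I would write $w_{j+1}=\mathbb A\tau^{j+1}+\mathbb B\sigma^{j+1}$ and $w_{j-1}=\mathbb A\tau^{j-1}+\mathbb B\sigma^{j-1}$. The key step is then to replace the coefficient $q$ multiplying $w_{j-1}$ by $\tau\sigma$; this turns $q\mathbb A\tau^{j-1}$ into $\mathbb A\tau^j\sigma$ and $q\mathbb B\sigma^{j-1}$ into $\mathbb B\tau\sigma^j$. Collecting the $\mathbb A$ and $\mathbb B$ contributions separately yields
\[
w_{j+1}-qw_{j-1}=\mathbb A\tau^j(\tau-\sigma)-\mathbb B\sigma^j(\tau-\sigma)=(\tau-\sigma)\left(\mathbb A\tau^j-\mathbb B\sigma^j\right).
\]
Dividing through by $\Delta=\tau-\sigma$ gives the claimed identity at once.

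There is essentially no obstacle in this argument: it is a one-line Binet manipulation whose only nuance is recognizing that the factor $q$ in front of $w_{j-1}$ should be rewritten as $\tau\sigma$, so as to raise the exponents back up to $j$ and expose the common factor $\tau-\sigma$. I note in passing that the computation never invokes $\tau+\sigma=1$, relying solely on $\tau\sigma=q$; consequently the same identity holds verbatim for the general Horadam sequence, with $W_j$ in place of $w_j$, and the restriction to $p=1$ in the statement merely reflects the form in which the result is later applied.
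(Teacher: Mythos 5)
Your proof is correct: writing $q=\tau\sigma$ to lift the exponents in $qw_{j-1}$ back to $j$ and factoring out $\tau-\sigma=\Delta$ is exactly the right one-line Binet computation. The paper itself offers no proof to compare against --- it imports the lemma from \cite[Lemma 1]{adegoke21} --- but your argument is the standard one that any such source would give. Your closing observation is also well taken: the computation uses only $\tau\sigma=q$, never $\tau+\sigma=p$, so the identity holds for the general Horadam sequence $(W_j)$; this is in fact how the paper applies it in the proof of Theorem~\ref{thm.xgurb9n}, where the conclusion is phrased in terms of $W$ rather than $w$.
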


\begin{theorem}\label{thm.xgurb9n}
Let $r$, $s$, $c$, $d$ and $a_n$ be any integers; $r\ne 0$. If $n$ is a positive even integer, then,
\begin{equation}\tag{F6a}\label{eq.mra1xxh}
\begin{split}
&\sum_{a_{n - 1}  = c}^{a_n } {\sum_{a_{n - 2}  = c}^{a_{n - 1} } { \cdots \sum_{a_0  = c}^{a_1 } {\left( {\frac{{V_d }}{{V_{r + d} }}} \right)^{a_0 } W_{ra_0  + s} } } }\\ 
&\quad = \frac{1}{{q^{dn}\Delta^n }}\left( {\frac{{V_d }}{{U_r}}} \right)^n \left( {\frac{{V_d }}{{V_{r + d} }}} \right)^{a_n } W_{r(n + a_n ) + dn + s}\\ 
&\qquad - \left( {\frac{{V_d }}{{V_{r + d} }}} \right)^{c - 1} \frac1{\Delta^n}\sum_{j = 0}^{(n - 2)/2} {\frac{\Delta^{2j}}{{q^{d(n - 2j)} }}\left( {\frac{{V_d }}{{U_r}}} \right)^{n - 2j} W_{{(r + d)(n - 2j) + r(c - 1) + s} } \binom{a_n + 2j - c}{2j}}\\ 
&\qquad\quad - \left( {\frac{{V_d }}{{V_{r + d} }}} \right)^{c - 1} \frac{1}{\Delta^{n + 2} }\sum_{j = 1}^{n/2} {\left\{ {\frac{\Delta^{2j}}{{q^{d(n - 2j + 1)} }}\left( {\frac{{V_d }}{{U_r}}} \right)^{n - 2j + 1} \left( {W_{{(r + d)(n - 2j + 1) + r(c - 1) + s + 1} }  - } \right.} \right.}\\
&\qquad\qquad\qquad\qquad\qquad\qquad\qquad\left. {\left. {qW_{{(r + d)(n - 2j + 1) + r(c - 1) + s - 1} } } \right)\binom{a_n + 2j - 1 - c}{2j - 1}} \right\}, 
\end{split}
\end{equation}
while if $n$ is a positive odd integer, then,
\begin{equation}\tag{F6b}\label{eq.w1eapv6}
\begin{split}
&\sum_{a_{n - 1}  = c}^{a_n } {\sum_{a_{n - 2}  = c}^{a_{n - 1} } { \cdots \sum_{a_0  = c}^{a_1 } {\left( {\frac{{V_d }}{{V_{r + d} }}} \right)^{a_0 } W_{ra_0  + s} } } }\\ 
&\quad= \frac{1}{{q^{dn} \Delta ^{n + 1} }}\left( {\frac{{V_d }}{{U_r }}} \right)^n \left( {\frac{{V_d }}{{V_{r + d} }}} \right)^{a_n } \left( {W_{{r(n + a_n ) + dn + s + 1} }  - qW_{{r(n + a_n ) + dn + s - 1} } } \right)\\
&\qquad- \left( {\frac{{V_d }}{{V_{r + d} }}} \right)^{c - 1} \frac{1}{{\Delta ^{n + 1} }}\sum_{j = 0}^{(n - 1)/2} {\left\{ {\frac{{\Delta ^{2j} }}{{q^{d(n - 2j)} }}\left( {\frac{{V_d }}{{U_r }}} \right)^{n - 2j} \left( {W_{(r + d)(n - 2j) + r(c - 1) + s + 1}  - } \right.} \right.} \\
&\qquad\qquad\qquad\qquad\qquad\qquad\qquad\qquad\left. {\left. {qW_{(r + d)(n - 2j) + r(c - 1) + s - 1} } \right)\binom{a_n + 2j - c}{2j}} \right\}\\
&\quad\qquad- \left( {\frac{{V_d }}{{V_{r + d} }}} \right)^{c - 1} \frac{1}{{\Delta ^{n + 1} }}\sum_{j = 1}^{(n - 1)/2 } {\frac{{\Delta ^{2j} }}{{q^{d(n - 2j + 1)} }}\left( {\frac{{V_d }}{{U_r }}} \right)^{n - 2j + 1} W_{{(r + d)(n - 2j + 1) + r(c - 1) + s} } \binom{a_n + 2j - 1 - c}{2j - 1}} .
\end{split}
\end{equation}
\end{theorem}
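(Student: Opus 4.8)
The plan is to proceed exactly as in the proofs of Theorems~\ref{thm.mrlteua} and~\ref{thm.l2xlubb}, feeding suitable arguments into the master identity~\eqref{eq.rjmjqrv}. Writing $W_{ra_0+s}=\mathbb A\tau^{ra_0+s}+\mathbb B\sigma^{ra_0+s}$ via the Binet formula~\eqref{eq.mt3covv}, the summand factorizes as
\[
\left(\frac{V_d}{V_{r+d}}\right)^{a_0}W_{ra_0+s}=\mathbb A\tau^s\left(\frac{\tau^r V_d}{V_{r+d}}\right)^{a_0}+\mathbb B\sigma^s\left(\frac{\sigma^r V_d}{V_{r+d}}\right)^{a_0},
\]
so the nested sum on the left of~\eqref{eq.mra1xxh}--\eqref{eq.w1eapv6} equals
\[
\mathbb A\tau^s f(\tau^r V_d,V_{r+d};a_n,n,c)+\mathbb B\sigma^s f(\sigma^r V_d,V_{r+d};a_n,n,c).
\]
First I would evaluate the two denominators produced by~\eqref{eq.rjmjqrv}: identities~\eqref{eq.j428hfx} and~\eqref{eq.cqli6xc} give $\tau^r V_d-V_{r+d}=\sigma^d U_r\Delta$ and $\sigma^r V_d-V_{r+d}=-\tau^d U_r\Delta$, which account for the powers of $U_r$ and $\Delta$ and, through $\tau\sigma=q$, for the powers of $q^{d}$ appearing in the stated formulas.

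Next I would combine the $\mathbb A$- and $\mathbb B$-contributions term by term. The leading term of~\eqref{eq.rjmjqrv} contributes
\[
\frac{V_d^{\,n+a_n}}{q^{dn}U_r^{\,n}\Delta^{\,n}V_{r+d}^{\,a_n}}\bigl(\mathbb A\tau^{J_0}+(-1)^n\mathbb B\sigma^{J_0}\bigr),\qquad J_0=r(n+a_n)+dn+s,
\]
while the generic summand of the sum in~\eqref{eq.rjmjqrv} contributes, after using $1/\sigma^{d(n-j)}=\tau^{d(n-j)}/q^{d(n-j)}$ and its mirror,
\[
\frac{V_d^{\,n-j}}{q^{d(n-j)}U_r^{\,n-j}\Delta^{\,n-j}}\bigl(\mathbb A\tau^{J_j}+(-1)^{\,n-j}\mathbb B\sigma^{J_j}\bigr),\qquad J_j=(r+d)(n-j)+r(c-1)+s,
\]
together with the common factor $\left(V_d/V_{r+d}\right)^{c-1}\binom{a_n+j-c}{j}$ and the overall minus sign inherited from~\eqref{eq.rjmjqrv}. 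The decisive observation is that the combination $\mathbb A\tau^{J}+(-1)^{m}\mathbb B\sigma^{J}$ collapses to $W_{J}$ when $m$ is even, but to $(W_{J+1}-qW_{J-1})/\Delta$ when $m$ is odd, the latter being precisely the general-$W$ form of Lemma~\ref{lem.w65xm59}.

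It then remains to resolve the parities. For the leading term the sign is $(-1)^n$, so it reduces to $W_{J_0}$ when $n$ is even and to $(W_{J_0+1}-qW_{J_0-1})/\Delta$ when $n$ is odd; this is the source of the two cases~\eqref{eq.mra1xxh} and~\eqref{eq.w1eapv6} and of the extra factor $1/\Delta$ in the leading term of the odd case. For the sum, the sign is $(-1)^{n-j}$, so I would split $\sum_{j=0}^{n-1}$ according to the parity of $n-j$: the even-$(n-j)$ part yields plain $W_{J_j}$, the odd-$(n-j)$ part yields $(W_{J_j+1}-qW_{J_j-1})/\Delta$ with its accompanying extra $1/\Delta$. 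Re-indexing each subsum by $j\mapsto 2j$ on the even indices and $j\mapsto 2j-1$ on the odd indices converts $\binom{a_n+j-c}{j}$ into $\binom{a_n+2j-c}{2j}$ and $\binom{a_n+2j-1-c}{2j-1}$ respectively, and turns $\Delta^{-(n-j)}$ (or $\Delta^{-(n-j)-1}$) into the $\Delta^{2j-n}$-type powers displayed. The main obstacle is purely clerical: keeping the four strands (leading versus sum, even versus odd parity) and their signs, $\Delta$-powers, $q$-powers and binomial arguments mutually consistent through the re-indexing; no new idea beyond Lemma~\ref{lem.w65xm59} and identities~\eqref{eq.j428hfx}--\eqref{eq.cqli6xc} is required.
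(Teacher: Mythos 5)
Your proposal is correct and follows essentially the same route as the paper: substitute $(\tau^r V_d,V_{r+d})$ and $(\sigma^r V_d,V_{r+d})$ into \eqref{eq.rjmjqrv}, use \eqref{eq.j428hfx} and \eqref{eq.cqli6xc} for the denominators, split the $j$-sum by parity, and apply Lemma~\ref{lem.w65xm59} to the odd-parity combinations $\mathbb A\tau^J-\mathbb B\sigma^J$. The only cosmetic difference is that the paper performs the even/odd re-indexing of \eqref{eq.rjmjqrv} first (obtaining \eqref{eq.e0mrmmw}) and substitutes afterwards, whereas you substitute first and split afterwards; the resulting bookkeeping is identical.
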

\begin{proof}
First employ the summation identity
\[
\sum_{j = 0}^m {f_j }  = \sum_{j = 0}^{\left\lfloor {m/2} \right\rfloor } {f_{2j} }  + \sum_{j = 1}^{\left\lceil {m/2} \right\rceil } {f_{2j - 1} } 
\]
to write \eqref{eq.rjmjqrv} as
\begin{equation}\tag{A2}\label{eq.e0mrmmw}
\begin{split}
f(x,y;a_n ,n,c) &= \sum_{a_{n - 1}  = c}^{a_n } {\sum_{a_{n - 2}  = c}^{a_{n - 1} } { \cdots \sum_{a_0  = c}^{a_1 } {\left( {\frac{x}{y}} \right)^{a_0 } } } } \\
&= \left( {\frac{x}{{x - y}}} \right)^n \left( {\frac{x}{y}} \right)^{a_n }  - \sum_{j = 0}^{\left\lfloor {(n - 1)/2} \right\rfloor } {\left( {\frac{x}{{x - y}}} \right)^{n - 2j} \left( {\frac{x}{y}} \right)^{c - 1} \binom{a_n + 2j - c}{2j}}\\ 
&\qquad- \sum_{j = 1}^{\left\lceil {(n - 1)/2} \right\rceil } {\left( {\frac{x}{{x - y}}} \right)^{n - 2j + 1} \left( {\frac{x}{y}} \right)^{c - 1} \binom{a_n + 2j - 1 - c}{2j - 1}}. 
\end{split}
\end{equation}
Using \eqref{eq.e0mrmmw}, the Binet formulas and \eqref{eq.j428hfx} and \eqref{eq.cqli6xc},
\[
\mathbb A\tau ^s f(\tau ^r V_d ,V_{r + d} ;a_n ,n) + \mathbb B\sigma ^s f(\sigma ^r V_d ,V_{r + d} ;a_n ,n)
\]
evaluates to
\begin{equation}\label{eq.qam9xta}
\begin{split}
&\sum_{a_{n - 1}  = c}^{a_n } {\sum_{a_{n - 2}  = c}^{a_{n - 1} } { \cdots \sum_{a_0  = c}^{a_1 } {\left( {\frac{{V_d }}{{V_{r + d} }}} \right)^{a_0 } W_{ra_0  + s} } } }\\ 
&= \frac{1}{{q^{dn} }}\left( {\frac{{V_d }}{{U_r \Delta }}} \right)^n \left( {\frac{{V_d }}{{V_{r + d} }}} \right)^{a_n } \left( {\mathbb A\tau ^{r(n + a_n ) + dn + s}  + ( - 1)^n \mathbb B\sigma ^{r(n + a_n ) + dn + s} } \right)\\
&\quad - \left( {\frac{{V_d }}{{V_{r + d} }}} \right)^{c - 1} \sum_{j = 0}^{\left\lfloor {(n - 1)/2} \right\rfloor } {\left\{ {\frac{1}{{q^{d(n - 2j)} }}\left( {\frac{{V_d }}{{U_r \Delta }}} \right)^{n - 2j} \left( {\mathbb A\tau ^{(r + d)(n - 2j) + r(c - 1) + s}  + } \right.} \right.} \\
&\qquad\qquad\qquad\qquad\qquad\qquad\left. {\left. {( - 1)^n \mathbb B\sigma ^{(r + d)(n - 2j) + r(c - 1) + s} } \right)\binom{a_n + 2j - c}{2j}} \right\}\\
&\quad\qquad- \left( {\frac{{V_d }}{{V_{r + d} }}} \right)^{c - 1} \sum_{j = 1}^{\left\lceil {(n - 1)/2} \right\rceil } {\left\{ {\frac{1}{{q^{d(n - 2j + 1)} }}\left( {\frac{{V_d }}{{U_r \Delta }}} \right)^{n - 2j + 1} \left( {\mathbb A\tau ^{(r + d)(n - 2j + 1) + r(c - 1) + s}  - } \right.} \right.}\\ 
&\qquad\qquad\qquad\qquad\qquad\qquad\qquad\left. {\left. {( - 1)^n \mathbb B\sigma ^{(r + d)(n - 2j + 1) + r(c - 1) + s} } \right)\binom{a_n + 2j - 1 - c}{2j - 1}} \right\}.
\end{split}
\end{equation}
The stated results now follow from the parity consideration of $n$ in \eqref{eq.qam9xta}, the Binet formulas and Lemma~\ref{lem.w65xm59}.
\end{proof}
We now state the gibonacci versions of \eqref{eq.mra1xxh} and \eqref{eq.w1eapv6} for any integers $r$, $s$, $c$, $d$ and $a_n$ such that $r\ne 0$. If $n$ is a positive even integer, then,
\begin{equation*}
\begin{split}
&\sum_{a_{n - 1}  = c}^{a_n } {\sum_{a_{n - 2}  = c}^{a_{n - 1} } { \cdots \sum_{a_0  = c}^{a_1 } {\left( {\frac{{L_d }}{{L_{r + d} }}} \right)^{a_0 } G_{ra_0  + s} } } }\\ 
&\quad = \frac{1}{{5^{n/2} }}\left( {\frac{{L_d }}{{F_r}}} \right)^n \left( {\frac{{L_d }}{{L_{r + d} }}} \right)^{a_n } G_{r(n + a_n ) + dn + s}\\ 
&\qquad - \left( {\frac{{L_d }}{{L_{r + d} }}} \right)^{c - 1} \frac1{5^{n/2}}\sum_{j = 0}^{(n - 2)/2} {5^j\left( {\frac{{L_d }}{{F_r}}} \right)^{n - 2j} G_{{(r + d)(n - 2j) + r(c - 1) + s} } \binom{a_n + 2j - c}{2j}}\\ 
&\qquad\quad - \left( {\frac{{L_d }}{{L_{r + d} }}} \right)^{c - 1} \frac{(-1)^d}{5^{(n + 2)/2} }\sum_{j = 1}^{n/2} {\left\{ {5^j\left( {\frac{{L_d }}{{F_r}}} \right)^{n - 2j + 1} \left( {G_{{(r + d)(n - 2j + 1) + r(c - 1) + s + 1} }  + } \right.} \right.}\\
&\qquad\qquad\qquad\qquad\qquad\qquad\qquad\qquad\left. {\left. {G_{{(r + d)(n - 2j + 1) + r(c - 1) + s - 1} } } \right)\binom{a_n + 2j - 1 - c}{2j - 1}} \right\}, 
\end{split}
\end{equation*}
while if $n$ is a positive odd integer, then,
\begin{equation*}
\begin{split}
&\sum_{a_{n - 1}  = c}^{a_n } {\sum_{a_{n - 2}  = c}^{a_{n - 1} } { \cdots \sum_{a_0  = c}^{a_1 } {\left( {\frac{{L_d }}{{L_{r + d} }}} \right)^{a_0 } G_{ra_0  + s} } } }\\ 
&\quad= \frac{(-1)^d}{{5 ^{(n + 1)/2} }}\left( {\frac{{L_d }}{{F_r }}} \right)^n \left( {\frac{{L_d }}{{L_{r + d} }}} \right)^{a_n } \left( {G_{{r(n + a_n ) + dn + s + 1} }  + G_{{r(n + a_n ) + dn + s - 1} } } \right)\\
&\qquad- \left( {\frac{{L_d }}{{L_{r + d} }}} \right)^{c - 1} \frac{(-1)^d}{{5 ^{(n + 1)/2} }}\sum_{j = 0}^{(n - 1)/2} {\left\{ {5^j\left( {\frac{{L_d }}{{F_r }}} \right)^{n - 2j} \left( {G_{(r + d)(n - 2j) + r(c - 1) + s + 1}  + } \right.} \right.} \\
&\qquad\qquad\qquad\qquad\qquad\qquad\qquad\qquad\left. {\left. {G_{(r + d)(n - 2j) + r(c - 1) + s - 1} } \right)\binom{a_n + 2j - c}{2j}} \right\}\\
&\quad\qquad- \left( {\frac{{L_d }}{{L_{r + d} }}} \right)^{c - 1} \frac{1}{{5 ^{(n + 1)/2} }}\sum_{j = 1}^{(n - 1)/2 } {5^j\left( {\frac{{L_d }}{{F_r }}} \right)^{n - 2j + 1} G_{{(r + d)(n - 2j + 1) + r(c - 1) + s} } \binom{a_n + 2j - 1 - c}{2j - 1}} .
\end{split}
\end{equation*}
In particular, for the Fibonacci sequence, we have that if $n$ is a positive even integer, then,
\begin{equation*}
\begin{split}
&\sum_{a_{n - 1}  = c}^{a_n } {\sum_{a_{n - 2}  = c}^{a_{n - 1} } { \cdots \sum_{a_0  = c}^{a_1 } {\left( {\frac{{L_d }}{{L_{r + d} }}} \right)^{a_0 } F_{ra_0  + s} } } }\\ 
&\quad = \frac{1}{{5^{n/2} }}\left( {\frac{{L_d }}{{F_r}}} \right)^n \left( {\frac{{L_d }}{{L_{r + d} }}} \right)^{a_n } F_{r(n + a_n ) + dn + s}\\ 
&\qquad - \left( {\frac{{L_d }}{{L_{r + d} }}} \right)^{c - 1} \frac1{5^{n/2}}\sum_{j = 0}^{(n - 2)/2} {5^j\left( {\frac{{L_d }}{{F_r}}} \right)^{n - 2j} F_{{(r + d)(n - 2j) + r(c - 1) + s} } \binom{a_n + 2j - c}{2j}}\\ 
&\qquad\quad - \left( {\frac{{L_d }}{{L_{r + d} }}} \right)^{c - 1} \frac{(-1)^d}{5^{(n + 2)/2} }\sum_{j = 1}^{n/2} {{5^j\left( {\frac{{L_d }}{{F_r}}} \right)^{n - 2j + 1}L_{{(r + d)(n - 2j + 1) + r(c - 1) + s} } \binom{a_n + 2j - 1 - c}{2j - 1}}}, 
\end{split}
\end{equation*}
while if $n$ is a positive odd integer, then,
\begin{equation*}
\begin{split}
&\sum_{a_{n - 1}  = c}^{a_n } {\sum_{a_{n - 2}  = c}^{a_{n - 1} } { \cdots \sum_{a_0  = c}^{a_1 } {\left( {\frac{{L_d }}{{L_{r + d} }}} \right)^{a_0 } F_{ra_0  + s} } } }\\ 
&\quad= \frac{(-1)^d}{{5 ^{(n + 1)/2} }}\left( {\frac{{L_d }}{{F_r }}} \right)^n \left( {\frac{{L_d }}{{L_{r + d} }}} \right)^{a_n } {L_{{r(n + a_n ) + dn + s} } }\\
&\qquad- \left( {\frac{{L_d }}{{L_{r + d} }}} \right)^{c - 1} \frac{(-1)^d}{{5 ^{(n + 1)/2} }}\sum_{j = 0}^{(n - 1)/2} {{5^j\left( {\frac{{L_d }}{{F_r }}} \right)^{n - 2j}L_{(r + d)(n - 2j) + r(c - 1) + s}\binom{a_n + 2j - c}{2j}}}\\
&\quad\qquad- \left( {\frac{{L_d }}{{L_{r + d} }}} \right)^{c - 1} \frac{1}{{5 ^{(n + 1)/2} }}\sum_{j = 1}^{(n - 1)/2 } {5^j\left( {\frac{{L_d }}{{F_r }}} \right)^{n - 2j + 1} F_{{(r + d)(n - 2j + 1) + r(c - 1) + s} } \binom{a_n + 2j - 1 - c}{2j - 1}} .
\end{split}
\end{equation*}
As for the sequence of Lucas numbers, we have that if $n$ is a positive even integer, then,
\begin{equation*}
\begin{split}
&\sum_{a_{n - 1}  = c}^{a_n } {\sum_{a_{n - 2}  = c}^{a_{n - 1} } { \cdots \sum_{a_0  = c}^{a_1 } {\left( {\frac{{L_d }}{{L_{r + d} }}} \right)^{a_0 } L_{ra_0  + s} } } }\\ 
&\quad = \frac{1}{{5^{n/2} }}\left( {\frac{{L_d }}{{F_r}}} \right)^n \left( {\frac{{L_d }}{{L_{r + d} }}} \right)^{a_n } L_{r(n + a_n ) + dn + s}\\ 
&\qquad - \left( {\frac{{L_d }}{{L_{r + d} }}} \right)^{c - 1} \frac1{5^{n/2}}\sum_{j = 0}^{(n - 2)/2} {5^j\left( {\frac{{L_d }}{{F_r}}} \right)^{n - 2j} L_{{(r + d)(n - 2j) + r(c - 1) + s} } \binom{a_n + 2j - c}{2j}}\\ 
&\qquad\quad - \left( {\frac{{L_d }}{{L_{r + d} }}} \right)^{c - 1} \frac{(-1)^d}{5^{n/2} }\sum_{j = 1}^{n/2} { {5^j\left( {\frac{{L_d }}{{F_r}}} \right)^{n - 2j + 1}{F_{{(r + d)(n - 2j + 1) + r(c - 1) + s} }}\binom{a_n + 2j - 1 - c}{2j - 1}}}, 
\end{split}
\end{equation*}
while if $n$ is a positive odd integer, then,
\begin{equation*}
\begin{split}
&\sum_{a_{n - 1}  = c}^{a_n } {\sum_{a_{n - 2}  = c}^{a_{n - 1} } { \cdots \sum_{a_0  = c}^{a_1 } {\left( {\frac{{L_d }}{{L_{r + d} }}} \right)^{a_0 } L_{ra_0  + s} } } }\\ 
&\quad= \frac{(-1)^d}{{5 ^{(n - 1)/2} }}\left( {\frac{{L_d }}{{F_r }}} \right)^n \left( {\frac{{L_d }}{{L_{r + d} }}} \right)^{a_n } {F_{{r(n + a_n ) + dn + s} }}\\
&\qquad- \left( {\frac{{L_d }}{{L_{r + d} }}} \right)^{c - 1} \frac{(-1)^d}{{5 ^{(n - 1)/2} }}\sum_{j = 0}^{(n - 1)/2} { {5^j\left( {\frac{{L_d }}{{F_r }}} \right)^{n - 2j} {F_{(r + d)(n - 2j) + r(c - 1) + s}}\binom{a_n + 2j - c}{2j}} }\\
&\quad\qquad- \left( {\frac{{L_d }}{{L_{r + d} }}} \right)^{c - 1} \frac{1}{{5 ^{(n + 1)/2} }}\sum_{j = 1}^{(n - 1)/2 } {5^j\left( {\frac{{L_d }}{{F_r }}} \right)^{n - 2j + 1} L_{{(r + d)(n - 2j + 1) + r(c - 1) + s} } \binom{a_n + 2j - 1 - c}{2j - 1}} .
\end{split}
\end{equation*}
\begin{theorem}\label{thm.bgdtrf}
Let $r$, $s$, $d$, $a_n$, $c$ be any integers; $r + 1\ne d$. If $W_{r+s}\ne 0$ and $W_{r+d}\ne 0$ then,
\begin{equation}\tag{F7}\label{eq.xu0xef5}
\begin{split}
&\sum\limits_{a_{n - 1}  = c}^{a_n } {\sum\limits_{a_{n - 2}  = c}^{a_{n - 1} } { \cdots \sum\limits_{a_0  = c}^{a_1 } {q^{a_0 } \left( {\frac{{U_{r - d} }}{{U_{r - d + 1} }}} \right)^{a_0 } \left( {\frac{{W_{s + d - 1} }}{{W_{s + d} }}} \right)^{a_0 } } } } \\
 &\qquad= ( - 1)^n q^{n + a_n } U_{r - d}^n \left( {\frac{{U_{r - d} }}{{U_{r - d + 1} }}} \right)^{a_n } \left( {\frac{{W_{s + d - 1} }}{{W_{s + d} }}} \right)^{a_n } \left( {\frac{{W_{s + d - 1} }}{{W_{r + s} }}} \right)^n \\
&\qquad\qquad - q^{c - 1} \left( {\frac{{U_{r - d} }}{{U_{r - d + 1} }}} \right)^{c - 1} \left( {\frac{{W_{s + d - 1} }}{{W_{s + d} }}} \right)^{c - 1} \sum\limits_{j = 0}^{n - 1} {( - 1)^{n - j} q^{n - j} U_{r - d}^{n - j} \left( {\frac{{W_{s + d - 1} }}{{W_{r + s} }}} \right)^{n - j} \binom{a_n + j - c}j} .
\end{split}
\end{equation}
\end{theorem}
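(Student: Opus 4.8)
The plan is to recognize the left-hand side as a single, direct instance of the geometric identity \eqref{eq.rjmjqrv}, with no need to split into the combination $\mathbb{A}\tau^s f + \mathbb{B}\sigma^s f$ used in the preceding theorems. Indeed, the summand
\[
q^{a_0}\left(\frac{U_{r-d}}{U_{r-d+1}}\right)^{a_0}\left(\frac{W_{s+d-1}}{W_{s+d}}\right)^{a_0}
\]
is a pure power $(x/y)^{a_0}$, so I would set $x = qU_{r-d}W_{s+d-1}$ and $y = U_{r-d+1}W_{s+d}$ and invoke \eqref{eq.rjmjqrv} directly. The whole theorem then reduces to evaluating the two quantities appearing on the right of \eqref{eq.rjmjqrv}: the ratio $x/y$, which is already the base of the summand, and $x/(x-y)$.

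First I would compute $x-y$. Using the Binet formulas \eqref{eq.mt3covv} and writing $q=\tau\sigma$, the product $qU_{r-d}W_{s+d-1}$ expands into four terms, and likewise $U_{r-d+1}W_{s+d}$; the two cross terms $\mathbb{B}\tau^{r-d+1}\sigma^{s+d}$ and $-\mathbb{A}\tau^{s+d}\sigma^{r-d+1}$ are common to both products and cancel in the difference, while the remaining four terms each pair off a factor $\tau-\sigma=\Delta$ and collapse to give $x-y=-W_{r+s}$. This is precisely the addition formula $W_{m+n}=U_{m+1}W_n-qU_mW_{n-1}$ evaluated at $m=r-d$, $n=s+d$, and it is the crux of the argument; everything afterwards is bookkeeping. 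Consequently
\[
\frac{x}{x-y}=-\frac{qU_{r-d}W_{s+d-1}}{W_{r+s}},
\]
which is well defined exactly because of the hypotheses: $r+1\ne d$ guarantees $U_{r-d+1}\ne0$ in the denominator of $x/y$, $W_{r+s}\ne 0$ makes $x/(x-y)$ meaningful, and $W_{s+d}\ne0$ is what lets the ratio $W_{s+d-1}/W_{s+d}$ in the summand be formed.

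Finally I would substitute $x/y$ and $x/(x-y)$ into the right-hand side of \eqref{eq.rjmjqrv}. The leading term $(x/(x-y))^n (x/y)^{a_n}$ produces $(-1)^n q^{n+a_n}U_{r-d}^{n+a_n}U_{r-d+1}^{-a_n}W_{s+d-1}^{n+a_n}W_{s+d}^{-a_n}W_{r+s}^{-n}$, which is exactly the first term of \eqref{eq.xu0xef5} once the powers of $U_{r-d}$, $U_{r-d+1}$, $W_{s+d-1}$, $W_{s+d}$, $W_{r+s}$ are regrouped into the displayed ratios. In the sum, each term $(x/(x-y))^{n-j}(x/y)^{c-1}\binom{a_n+j-c}{j}$ contributes the sign $(-1)^{n-j}$ together with $q^{n-j}U_{r-d}^{n-j}(W_{s+d-1}/W_{r+s})^{n-j}$, and the common factor $(x/y)^{c-1}=q^{c-1}(U_{r-d}/U_{r-d+1})^{c-1}(W_{s+d-1}/W_{s+d})^{c-1}$ pulls out front to reproduce the second line of \eqref{eq.xu0xef5}. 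The main obstacle is thus solely the Binet cancellation yielding $x-y=-W_{r+s}$; once that identity is in hand, the result follows by matching exponents.
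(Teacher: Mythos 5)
Your proposal is correct and follows essentially the same route as the paper: it applies \eqref{eq.rjmjqrv} directly with $x=qU_{r-d}W_{s+d-1}$, $y=U_{r+d-1+1}W_{s+d}$ (i.e.\ $U_{r-d+1}W_{s+d}$) and reduces everything to $x-y=-W_{r+s}$, which is exactly Horadam's Identity 3.15 that the paper cites. The only difference is that you rederive that identity from the Binet formulas rather than quoting it, which is a harmless substitution.
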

\begin{proof}
Refer to~\eqref{eq.rjmjqrv} and simplify
\[
f(qU_{r - d}W_{s + d - 1},U_{r - d + 1}W_{s + d};a_n,n,c),
\]
making use of the following identity~\cite[Identity 3.15]{horadam65}:
\[
W_{r + s}  = U_{r - d + 1} W_{s + d}  - qU_{r - d} W_{s + d - 1}. 
\]

\end{proof}
The restricted Horadam version ($p=1$) and the gibonacci version of \eqref{eq.xu0xef5} are
\[
\begin{split}
&\sum\limits_{a_{n - 1}  = c}^{a_n } {\sum\limits_{a_{n - 2}  = c}^{a_{n - 1} } { \cdots \sum\limits_{a_0  = c}^{a_1 } {q^{a_0 } \left( {\frac{{u_{r - d} }}{{u_{r - d + 1} }}} \right)^{a_0 } \left( {\frac{{w_{s + d - 1} }}{{w_{s + d} }}} \right)^{a_0 } } } } \\
 &\qquad= ( - 1)^n q^{n + a_n } u_{r - d}^n \left( {\frac{{u_{r - d} }}{{u_{r - d + 1} }}} \right)^{a_n } \left( {\frac{{w_{s + d - 1} }}{{w_{s + d} }}} \right)^{a_n } \left( {\frac{{w_{s + d - 1} }}{{w_{r + s} }}} \right)^n \\
&\qquad\qquad - q^{c - 1} \left( {\frac{{u_{r - d} }}{{u_{r - d + 1} }}} \right)^{c - 1} \left( {\frac{{w_{s + d - 1} }}{{w_{s + d} }}} \right)^{c - 1} \sum\limits_{j = 0}^{n - 1} {( - 1)^{n - j} q^{n - j} u_{r - d}^{n - j} \left( {\frac{{w_{s + d - 1} }}{{w_{r + s} }}} \right)^{n - j} \binom{a_n + j - c}j},
\end{split}
\]

\[
\begin{split}
&\sum\limits_{a_{n - 1}  = c}^{a_n } {\sum\limits_{a_{n - 2}  = c}^{a_{n - 1} } { \cdots \sum\limits_{a_0  = c}^{a_1 } {(-1)^{a_0 } \left( {\frac{{F_{r - d} }}{{F_{r - d + 1} }}} \right)^{a_0 } \left( {\frac{{G_{s + d - 1} }}{{G_{s + d} }}} \right)^{a_0 } } } } \\
 &\qquad= ( - 1)^{a_n} F_{r - d}^n \left( {\frac{{F_{r - d} }}{{F_{r - d + 1} }}} \right)^{a_n } \left( {\frac{{G_{s + d - 1} }}{{G_{s + d} }}} \right)^{a_n } \left( {\frac{{G_{s + d - 1} }}{{G_{r + s} }}} \right)^n \\
&\qquad\qquad + (-1)^c \left( {\frac{{F_{r - d} }}{{F_{r - d + 1} }}} \right)^{c - 1} \left( {\frac{{G_{s + d - 1} }}{{G_{s + d} }}} \right)^{c - 1} \sum\limits_{j = 0}^{n - 1} {F_{r - d}^{n - j} \left( {\frac{{G_{s + d - 1} }}{{G_{r + s} }}} \right)^{n - j} \binom{a_n + j - c}j} .
\end{split}
\]
Taking $r=1$, $d=0$ gives
\[
\begin{split}
&\sum\limits_{a_{n - 1}  = c}^{a_n } {\sum\limits_{a_{n - 2}  = c}^{a_{n - 1} } { \cdots \sum\limits_{a_0  = c}^{a_1 } {q^{a_0 } \left( {\frac{{w_{s - 1} }}{{w_s }}} \right)^{a_0 } } } } \\
 &\qquad= ( - 1)^n q^{n + a_n } \left( {\frac{{w_{s - 1} }}{{w_s }}} \right)^{a_n } \left( {\frac{{w_{s - 1} }}{{w_{s + 1} }}} \right)^n \\
&\qquad\qquad - q^{c - 1} \left( {\frac{{w_{s - 1} }}{{w_s }}} \right)^{c - 1} \sum\limits_{j = 0}^{n - 1} {( - 1)^{n - j} q^{n - j} \left( {\frac{{w_{s - 1} }}{{w_{s + 1} }}} \right)^{n - j} \binom{a_n + j - c}j}, 
\end{split}
\]

\[
\begin{split}
&\sum\limits_{a_{n - 1}  = c}^{a_n } {\sum\limits_{a_{n - 2}  = c}^{a_{n - 1} } { \cdots \sum\limits_{a_0  = c}^{a_1 } {(-1)^{a_0 } \left( {\frac{{G_{s - 1} }}{{G_s }}} \right)^{a_0 } } } } \\
 &\qquad= ( - 1)^{a_n} \left( {\frac{{G_{s - 1} }}{{G_s }}} \right)^{a_n } \left( {\frac{{G_{s - 1} }}{{G_{s + 1} }}} \right)^n \\
&\qquad\qquad + (-1)^c \left( {\frac{{G_{s - 1} }}{{G_s }}} \right)^{c - 1} \sum\limits_{j = 0}^{n - 1} {\left( {\frac{{G_{s - 1} }}{{G_{s + 1} }}} \right)^{n - j} \binom{a_n + j - c}j}. 
\end{split}
\]
\section{Concluding comments}
In this paper we evaluated the following nested sums involving the terms of the Horadam sequence:
\[
\begin{split}
&\sum_{a_{n - 1}  = c}^{a_n } {\sum_{a_{n - 2}  = c}^{a_{n - 1} } { \cdots \sum_{a_0  = c}^{a_1 } {\frac{{W_{ra_0  + s} }}{{V_r^{a_0 } }}} } },\quad \sum_{a_{n - 1}  = c}^{a_n } {\sum_{a_{n - 2}  = c}^{a_{n - 1} } { \cdots \sum_{a_0  = c}^{a_1 } {\frac{{( - 1)^{a_0 } W_{2ra_0  + s} }}{{q^{ra_0 } }}} } } ,\\
&\\
&\sum_{a_{n - 1}  = c}^{a_n } {\sum_{a_{n - 2}  = c}^{a_{n - 1} } { \cdots \sum_{a_0  = c}^{a_1 } {\left( {\frac{{U_d }}{{U_{r + d} }}} \right)^{a_0 } W_{ra_0  + s} } } },\quad\sum_{a_{n - 1}  = c}^{a_n } {\sum_{a_{n - 2}  = c}^{a_{n - 1} } { \cdots \sum_{a_0  = c}^{a_1 } {\left( {\frac{{V_d }}{{V_{r + d} }}} \right)^{a_0 } W_{ra_0  + s} } } },\\
&\\
&\sum\limits_{a_{n - 1}  = c}^{a_n } {\sum\limits_{a_{n - 2}  = c}^{a_{n - 1} } { \cdots \sum\limits_{a_0  = c}^{a_1 } {q^{a_0 } \left( {\frac{{U_{r - d} }}{{U_{r - d + 1} }}} \right)^{a_0 } \left( {\frac{{W_{s + d - 1} }}{{W_{s + d} }}} \right)^{a_0 } } } }.
\end{split}
\]
Explicit results for the special Horadam sequence $(w_j)$ and the gibonacci sequence $G_j$ were presented.

The starting point for each summation in the multiple sums considered was fixed at $a_i=c$, for $i=0,1,2,\ldots,n - 1$. It is possible to choose a different lower limit for each sum in the nested sum. The aim would then be to evaluate
\begin{equation}\tag{S}\label{eq.ea960e8}
\sum_{a_{n - 1}  = c_{n - 1} }^{a_n } {\sum_{a_{n - 2}  = c_{n - 2} }^{a_{n - 1} } { \cdots \sum_{a_1  = c_1 }^{a_2 } {\sum_{a_0  = c_0 }^{a_1 } {x^{a_0 } } } } },
\end{equation}
in which the sequence of integers $(c_i)$, $i=0,1,2,\ldots,n - 1$, is such that $c_i$ may be different from $c_j$ if $i$ is different from $j$. In this case, a similar iteration to what produced \eqref{eq.v0zquxz} would give
\[
\begin{split}
&\left( {\frac{{x - 1}}{x}} \right)^n \sum_{a_{n - 1}  = c_{n - 1} }^{a_n } {\sum_{a_{n - 2}  = c_{n - 2} }^{a_{n - 1} } { \cdots \sum_{a_0  = c_0 }^{a_1 } {x^{a_0 } } } }\\ 
&\qquad= x^{a_n }  - x^{c_{n - 1}  - 1}  - \sum_{j = 1}^{n - 1} {\left\{ {\left( {\frac{{x - 1}}{x}} \right)^j x^{c_{n - j - 1}  - 1} \sum_{a_{n - 1}  = c_{n - 1} }^{a_n } {\sum_{a_{n - 2}  = c_{n - 2} }^{a_{n - 1} } { \cdots \sum_{a_{n - j}  = c_{n - j} }^{a_{n - j + 1} } 1 } } } \right\}}. 
\end{split}
\]
A suggestion for further research would be the determination of sums of the form
\[
\sum_{b_{s - 1}  = c_{s - 1} }^{b_s } {\sum_{b_{s - 2}  = c_{s - 2} }^{b_{s - 1} } { \cdots \sum_{b_1  = c_1 }^{b_2 } {\sum_{b_0  = c_0 }^{b_1 } {1} } } },
\]
which would facilitate the evaluation of \eqref{eq.ea960e8}.

\hrule



\hrule



\end{document}